\numberwithin{equation}{section}
\newcommand{\width}{{\mathrm{width}}}
\newtheorem{theorem}{Theorem}[section]
\newtheorem{corollary}[theorem]{Corollary}
\newtheorem{definition}[theorem]{Definition}
\newtheorem{lemma}[theorem]{Lemma}
\newtheorem{proposition}[theorem]{Proposition}
\theoremstyle{remark}
\newtheorem{remark}[theorem]{Remark}
\newtheorem*{theorem*}{Theorem}
\newtheorem*{corollary*}{\bf Corollary}
\theoremstyle{remark}
\newtheorem{example}[theorem]{Example}
\title[On the Gromov width of toric manifolds]{A note on the Gromov width of toric manifolds}
\author[B.~N.~Chary]{Narasimha Chary Bonala}
\address{Narasimha Chary Bonala\\ Ruhr-Universit\"at Bochum, Fakult\"at f\"ur Mathematik, D-44780 Bochum, Germany}
\curraddr{Department of Mathematics and Statistics, Indian Institute of Technology Kanpur,
U.P. India, 208016.
}
\email{Narasimha.Bonala@rub.de and chary@iitk.ac.in}
\author[S.~Cupit-Foutou]{St\'ephanie Cupit-Foutou}
\address{St\'ephanie Cupit-Foutou\\ Ruhr-Universit\"at Bochum, Fakult\"at f\"ur Mathematik, D-44780 Bochum, Germany}
\email{Stephanie.cupit@rub.de}
\thanks{This research was supported by the  CRC/TRR 191 “Symplectic Structures in Geometry, Algebra and Dynamics” of the Deutsche Forschungsgemeinschaft.}
\keywords{Gromov width, Seshadri constant, minimal curve, toric variety}
\begin{document}

%%%%%%%%%%%%%%%%%%%%%%%%%%%%%%%%%%%%%%%%%%%%%
\begin{abstract}
The Gromov width of a uniruled projective K\"ahler manifold can be bounded from above by the symplectic area
of its minimal curves. We apply this result to toric varieties and thus get in this case upper bounds expressed in toric combinatorial invariants. 
\end{abstract}
%%%%%%%%%%%%%%%%%%%%%%%%%%%%%%%%%%%%%%%%%%%%%%%%%%%%

\maketitle

%%%%%%%%%%%%%%%%%%%%%%%%%%%%%%%%%%%%%%%%
\section{Introduction}
The Gromov width of a $2n$-dimensional symplectic manifold $(X, \omega)$ is defined as
$$w_G(X, \omega) := sup\{ a: (B^{2n}(\sqrt{a/\pi}),\omega_{st})~\text{symplectically embeds into}~ (X,\omega)\},$$
where $(B^{2n}(r), \omega_{st})$ is the ball of radius $r$ centered at the origin in $\mathbb R^{2n}$ and equipped with the standard symplectic form.
This is a symplectic invariant which is in general difficult to compute.
Computations and estimates of the Gromov width in various cases have been obtained by several authors (see for example~\cite{biran, kartol,LMZ,Cas,FLP,HLS} and references therein).

In this article, we consider projective K\"ahler manifolds which are uniruled, i.e. covered by rational curves.
As shown in the following theorem, 
the Gromov width of any uniruled projective K\"ahler manifold is bounded from above by the symplectic area of any minimal curve.

\begin{theorem}\label{thm:main1}
Let $X$ be a projective complex manifold and $\omega$ a K\"ahler form of $X$.
For any minimal curve $C$ of $X$, we have the inequality
$$
w_G(X, \omega)\leq  \int_C \omega.
$$
\end{theorem}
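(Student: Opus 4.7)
The plan is to use the symplectic embedding of a large ball together with the uniruled property to produce a $J$-holomorphic rational curve through the center of the ball, and then apply the classical monotonicity formula for minimal surfaces in Euclidean balls. Concretely, I would fix an arbitrary $a < w_G(X,\omega)$ and pick a symplectic embedding $\phi: (B^{2n}(r),\omega_{st}) \hookrightarrow (X,\omega)$ with $r = \sqrt{a/\pi}$. Set $p := \phi(0)$. Since the space of $\omega$-compatible almost complex structures is nonempty and contractible, I can choose such a $J$ on $X$ whose pullback under $\phi$ equals the standard complex structure $J_0$ on the ball.

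The key input is the following: because $[C]$ is the class of a minimal curve in the uniruled manifold $X$, through every point of $X$ and for every $\omega$-compatible $J$ there passes a $J$-holomorphic rational curve in the class $[C]$. My plan to justify this is to argue that for the original complex structure, uniruledness provides a rational curve in $[C]$ through a generic point; deforming the complex structure to $J$ and applying Gromov compactness, I would show that no sphere bubbling can occur, since any splitting of $[C]$ into effective classes would exhibit classes of strictly smaller $\omega$-area, contradicting the defining minimality of $C$. Hence a $J$-holomorphic curve $C'$ in class $[C]$ survives through $p$. This is the step I expect to be the main obstacle, as it is exactly where the definition of \emph{minimal curve} must be pinned down and where a one-point Gromov-Witten-type nonvanishing is really invoked.

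Once such a $C'$ through $p$ is in hand, the final step is analytic. Pulling back the intersection $C' \cap \phi(B^{2n}(r))$ to the Euclidean ball under $\phi$ produces a $J_0$-holomorphic curve in $(B^{2n}(r),\omega_{st})$ passing through the origin and reaching the boundary (since $C'$ is compact and not contained in the open ball). Such curves are calibrated by $\omega_{st}$ and therefore minimal in the Riemannian sense, so the monotonicity formula for minimal surfaces gives
$$\mathrm{Area}\bigl(\phi^{-1}(C' \cap \phi(B^{2n}(r)))\bigr) \geq \pi r^2 = a.$$
For $J_0$-holomorphic curves area coincides with symplectic area, hence
$$\int_C \omega = \int_{C'} \omega \geq \int_{C' \cap \phi(B^{2n}(r))} \omega \geq a.$$
Letting $a \to w_G(X,\omega)$ then yields the stated inequality. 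The only other minor point to address is that, should $C'$ a priori be reducible or multiply covered, one takes the irreducible component through $p$; its class is still effective of $\omega$-area at most $\int_C \omega$, which suffices for the monotonicity estimate.
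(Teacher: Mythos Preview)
Your overall architecture---embed a ball, make $J$ standard on it, produce a $J$-curve through the centre, apply monotonicity---is precisely the mechanism behind Gromov's theorem (Theorem~\ref{thm:Gromov} in the paper), so at that level you and the paper agree. The whole content of the result lies in the step you flag as the obstacle: existence of a $J$-holomorphic rational curve in $[C]$ through $p$ for your specific non-integrable $J$. Here your justification does not go through. Your no-bubbling claim misreads the definition: a \emph{minimal curve} is a member of a \emph{minimal family} (a covering family $\mathcal K$ with $\mathcal K_x$ projective for general $x$), which says nothing about $[C]$ being indecomposable or of least $\omega$-area, so a splitting into effective classes of smaller area contradicts nothing. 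The paper in fact first reduces (Lemma~\ref{lem:md} plus openness of the K\"ahler cone) from arbitrary minimal curves to \emph{free minimal-degree} curves with respect to an ample $\mathcal L$, and even then the statement that $ev_1^{-1}(x)$ lies in the uncompactified moduli is argued only for the \emph{integrable} structure, where a degeneration through a general $x$ would produce a covering family of smaller $\mathcal L$-degree---an argument with no analogue for a non-integrable $J$.

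More fundamentally, even with your fallback of taking an irreducible component through $p$, you still need \emph{some} stable $J$-map in class $[C]$ through $p$ for your chosen $J$. Gromov compactness controls limits of sequences; it does not guarantee existence for every $J$ along a path---curves can simply disappear unless a deformation-invariant count is nonzero. That is exactly why the paper packages the existence statement as the non-vanishing of a genus-zero Gromov--Witten invariant with a point insertion (Theorem~\ref{thm:GW-integral}): for the integrable structure, freeness and the minimal-degree property force $\mathcal M_{pt}=ev_1^{-1}(x)$ to be a smooth projective manifold of expected dimension with $[\mathcal M_{pt}]^{vir}=[\mathcal M_{pt}]$ and $ev^{k-1}$ an immersion, whence the invariant is nonzero (this is the Koll\'ar--Ruan input). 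Deformation invariance of GW then carries this to every compatible $J$, and Theorem~\ref{thm:Gromov} (which is your monotonicity argument, cleanly stated) finishes. Your outline needs precisely this ingredient to close.
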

Note that minimal curves exist on uniruled projective complex manifolds (see e.g. Theorem~\ref{thm:existence of minimal}). 
Similarly as in some previous works (see e.g.~\cite{Lu06b,Cas}), to show Theorem~\ref{thm:main1}, 
one may apply the methods of Gromov's used in~\cite{Gro}.
This leads to prove the non-vanishing of some Gromov-Witten invariants. Here we concentrate only on minimal curves of projective K\"ahler manifolds, which enables to use some ideas of Koll\'ar and Ruan's to show the existence of such invariants; see Section~\ref{sec:upperbound} for details. 

To the best of our knowledge, the upper bound given in Theorem~\ref{thm:main1} is the best possible bound that can be obtained by Gromov-Witten invariants.
This bound is sharp for polarized coadjoint orbits, more generally for polarized Bott-Samelson varieties, and for generalized Bott manifolds as well. This follows from the main result of 
\cite{Cas} and \cite{FLP} together, \cite{BCF} and \cite{HLS} respectively, combined with the characterization of minimal curves of these varieties (see~\cite{BCF}). 

While specifying our results to toric varieties,
we obtain upper bounds for Gromov widths in terms of toric combinatorial invariants.
More precisely, let $(X,\omega)$ be a compact K\"ahler toric manifold.
Let $\Sigma$ denote the fan of $X$ and $\Sigma(1)$ be the set of one-dimensional cones in $\Sigma$. 
Then $[\omega]\in H^2(X,\mathbb R)$ can be written as 
\begin{equation} \label{eq:kaehlerclass}
[\omega]={\textstyle\sum}_{\rho\in \Sigma(1)}\kappa_{\rho}[D_{\rho}]
\end{equation}
where $[D_{\rho}]$ is the divisor class of $X$ associated to $\rho$ and $\kappa_{\rho}\in \mathbb R$.

For each $\rho\in\Sigma(1)$, let $\eta_{\rho}$ denote the primitive vector in the colattice of $X$.  
The interpretation of curves in terms of relations together with the combinatorial classification of minimal families of rational curves given in \cite{CFH} naturally lead to
consider the following set 
$$
\big\{\small{\textstyle\sum}_{\rho\in \Sigma(1)}\kappa_{\rho}a_{\rho}: {\textstyle\sum}_{\rho\in \Sigma(1)}a_{\rho}\eta_{\rho}=0, ~a_{\rho}\in \mathbb Z_{\geq 0} , ~ \forall~ \rho \in \Sigma(1)\mbox{ and } (a_\rho)_\rho\neq\bf 0%
\big\}
$$
and along with Theorem~\ref{thm:main1} infer the following theorem (see Section~\ref{sec:Proof2} for details).

\begin{theorem}\label{thm:toric}
Let $(X,\omega)$ be a compact K\"ahler toric manifold and let $[\omega]$ be given by Eq.(1.1). Then 
\begin{equation}\label{Eq:toric}
   w_G(X,\omega) \leq \min
\big\{\small{\textstyle\sum}_{\rho\in \Sigma(1)}\kappa_{\rho}a_{\rho}: {\textstyle\sum}_{\rho\in \Sigma(1)}a_{\rho}\eta_{\rho}=0,  ~a_{\rho}\in \mathbb Z_{\geq 0} , ~ \forall~ \rho \in \Sigma(1)\mbox{ and } (a_\rho)_\rho\neq\bf 0 %~ \forall~ \rho \in \Sigma(1) 
\big\}.
\end{equation} 
Moreover, the above minimum is attained in case $a_{\rho}\leq 1$ for all $\rho$.
\end{theorem}

The first assertion of Theorem~\ref{thm:toric} extends Lu's theorems obtained in ~\cite{Lu06a} for Fano smooth projective toric varieties and their blow ups at torus fixed points;
both assertions give affirmative answers to some questions raised in \cite{HLS}.
We notice also that one of these questions is stated as a conjecture in~\cite{AHN}.  
All this is discussed in detail in Subsection~\ref{sec:comparison}. 

Section~\ref{sec:Seshadri} on Seshadri constants concludes this work. Gromov widths and Seshadri constants of projective complex manifolds $X$ equipped with a very ample line bundle are closely related: as proved in \cite[Proposition 6.2.1]{BC01}, the latter is upper bounded by the former.
Theorem~\ref{thm:main1} and Theorem~\ref{thm:toric} thus yield upper bounds of Seshadri constants for the varieties under consideration, as stated in Corollary~\ref{cor:Seshadri-general} and Corollary~\ref{cor:Seshadri-toric}.

%%%%%%%%%%%%%%%%%%%%%%%%%%%%%%%%%%%%

\section{Upper bounds for the Gromov width of K\"ahler manifolds}\label{sec:upperbound}

The purpose of this section is to prove Theorem~\ref{thm:main1}. 
As already mentioned in the introduction, a key ingredient in our proof is a theorem essentially due to Gromov; the latter is recalled in the first subsection. In the second subsection, we review important notions and results on the curves we consider, that are the minimal curves. We thus proceed with the proof Theorem~\ref{thm:main1}.

\subsection{Gromov's Theorem}\label{sec:GW}

In order to state properly Gromov's Theorem, we shall start with some recalls on Gromov-Witten invariants.

%%%%%%%%%%%%%%%%%%%%%%%%%%%%%%%%%%%%%%%

In this subsection, $(X,\omega)$ denotes a symplectic manifold.
 
Given  $A\in H_2(X, \mathbb Z)$ and $J$ an almost complex structure on $X$ compatible with $\omega$, consider the moduli space $\overline{\mathcal M}^X_{k}(A,J):=\overline{\mathcal M^X_{0,k}(A,J)}$ 
of $J$-holomorphic stable maps to $X$ of genus $0$, of class $A$ and with $k$ marked points. 
This space carries a virtual fundamental class $[\overline{\mathcal M}^X_{k}(A)]^{vir}$, independent of $J$, and in
the rational \v{C}ech homology group $\check{H}_{d}(\overline{\mathcal M}^X_{k}(A,J), \mathbb Q)$ where $d$ denotes the expected dimension of
$\overline{\mathcal M}^X_{k}(A,J)$, that is
$$
d = dimX + 2 c_1(A) + 2k-6
$$
with $c_1$ being the first Chern class of the tangent bundle of $X$.

Let
$$
ev^k:\overline{\mathcal{M}}^X_{k}(A,J)\longrightarrow X^k
$$
be the evaluation map sending a stable map to the $k$-tuple of its values at the $k$ marked points and
$$
\pi: \overline{\mathcal{M}}^X_{k}(A,J) \longrightarrow\overline{\mathcal{M}}_k
$$
be the forgetful map with target the moduli of stable curves of genus $0$ with $k$ marked points.

Since the space $X^k \times \overline{\mathcal{M}}_k$ is both paracompact and locally contractible, the singular cohomology and \v{C}ech cohomology are equivalent. Consequently, the push-forward of $[\overline{\mathcal{M}}^X_{k}(A)]^{vir}$ under $ev^k\times \pi$ can be interpreted as a rational singular homology class.

For $\alpha_i\in H^*(X,\mathbb Q)$ with $i= 1,...,k$ and $\beta\in H_*(\overline{\mathcal{M}}_k,\mathbb Q)$, the
Gromov-Witten invariant is defined to be the rational number
$$
GW^{(X,\omega)}_{A,k}(\alpha_1,...,\alpha_k;\beta):=
\int_{(ev^k\times \pi)_{*}([\overline{\mathcal M}^X_{k}(A)]^{vir})} \alpha_1\times ...\times\alpha_k\times 
 \mathrm{PD}\beta,
$$
whenever the degrees of $\alpha_1,\ldots,\alpha_k,\beta$ sum up to the expected dimension $d$; otherwise it is $0$.

As mentioned above, we consider $(ev^k\times \pi)_{*}([\overline{\mathcal M}^X_{k}(A)]^{vir})$ as a rational singular homology class. When defining the Gromov-Witten invariant, the integration refers to the pairing of this class with a cohomology class, which yields a rational number.

The following theorem is well-known; it is thoroughly proved e.g. in~\cite{HLS} by using ideas of Gromov's. Cf.~\cite[Theorem 1.27]{Lu06b}.

\begin{theorem}[Gromov] \label{thm:Gromov}
Let $(X,\omega)$ be a symplectic manifold and
$A \in H_2(X, \mathbb Z)$ be a non-trivial second homology class.
Suppose $GW^{(X,\omega)}_{A, k}(PD[pt], \alpha_2, \ldots, \alpha_k;\beta)\neq 0$ for some $k$, 
$\alpha_i\in H^*(X, \mathbb Q)$ and $\beta\in H_*(\overline{\mathcal{M}}_k,\mathbb Q)$. 
Then the inequality $w_G(X,\omega)\leq \int_A\omega$ holds.
\end{theorem}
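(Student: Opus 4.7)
The plan is to prove the contrapositive-style inequality: for every symplectic embedding $\phi:(B^{2n}(r),\omega_{st})\hookrightarrow(X,\omega)$ one has $\pi r^{2}\leq \omega(A)$; taking the supremum over admissible $r$ then gives the theorem. The strategy splits into (i) using the non-vanishing of the Gromov--Witten invariant with the insertion $PD[pt]$ to produce a genuine $J$-holomorphic stable map in class $A$ passing through the centre of the embedded ball, and (ii) estimating the symplectic area of such a map from below by $\pi r^{2}$ using the monotonicity formula for pseudoholomorphic curves.

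For step (i), I would fix $r'<r$ and choose an $\omega$-compatible almost complex structure $J$ on $X$ whose restriction to the open set $\phi(B^{2n}(r'))$ coincides with $\phi_{*}J_{st}$; this is achieved by interpolating, using a partition of unity on the space of compatible almost complex structures, which is contractible. Since the virtual fundamental class $[\overline{\mathcal M}^{X}_{k}(A)]^{vir}$ is independent of the compatible $J$, the hypothesis that $GW^{(X,\omega)}_{A,k}(PD[pt],\alpha_{2},\ldots,\alpha_{k};\beta)\neq 0$ persists for this particular $J$. The insertion of $PD[pt]$ at the first marked point is equivalent to intersecting the virtual cycle with the preimage $(ev^{k}_{1})^{-1}(p)$ for a generic $p\in X$, so after a small generic perturbation $J_{\varepsilon}$ of $J$ supported away from $\phi(B^{2n}(r'))$, there exists, for each generic $p_{\varepsilon}$ close to $\phi(0)$, an honest $J_{\varepsilon}$-holomorphic stable map $u_{\varepsilon}$ of class $A$ with $u_{\varepsilon}(z_{1})=p_{\varepsilon}$. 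Applying Gromov's compactness theorem as $\varepsilon\to 0$ and $p_{\varepsilon}\to \phi(0)$, we extract a limiting $J$-holomorphic stable map $u:(\Sigma,z_{1},\ldots,z_{k})\to X$ of class $A$ with $u(z_{1})=\phi(0)$ and total symplectic area $\int_{\Sigma}u^{*}\omega=\omega(A)$.

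For step (ii), let $\Sigma_{0}$ be the irreducible component of $\Sigma$ containing $z_{1}$. Because $J=\phi_{*}J_{st}$ on $\phi(B^{2n}(r'))$, the composition $\phi^{-1}\circ u$ restricted to $u^{-1}(\phi(B^{2n}(r')))\cap\Sigma_{0}$ is a proper $J_{st}$-holomorphic curve in the Euclidean ball $B^{2n}(r')$ passing through the origin. Since any such curve is a minimal surface, the classical monotonicity formula (Lelong number bound) yields that its area is at least $\pi(r')^{2}$. Using the taming identity $u^{*}\omega=|du|_{J}^{2}/2$ on every component and positivity of all other contributions, one obtains
$$
\omega(A)=\int_{\Sigma}u^{*}\omega\;\geq\;\int_{u^{-1}(\phi(B^{2n}(r')))\cap\Sigma_{0}}u^{*}\omega\;\geq\;\pi(r')^{2}.
$$
Letting $r'\uparrow r$ and varying the embedding gives $\omega(A)\geq w_G(X,\omega)$.

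The main obstacle is the first step: the virtual fundamental class is a homological object and does not, by itself, guarantee that through the prescribed (non-generic) point $\phi(0)$ there passes an actual $J$-holomorphic curve in class $A$, especially when $(X,\omega)$ fails to be semipositive. The cleanest way around this is the limiting argument sketched above, combined with the fact that the perturbations $J_{\varepsilon}$ can be taken constant on $\phi(B^{2n}(r'))$, so the Gromov limit is automatically $J$-holomorphic where it matters for the monotonicity estimate. The monotonicity estimate itself in step (ii) is standard, going back to Gromov's seminal paper, and is already used in \cite{HLS}; no new analytic input is required.
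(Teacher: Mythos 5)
Your overall strategy---make $J$ standard on the embedded ball, produce a stable map in class $A$ through the centre, then apply monotonicity---is the classical Gromov argument, and it is essentially the route of the proof the paper defers to: the paper does not reprove Theorem~\ref{thm:Gromov} but cites \cite{HLS} (cf.\ \cite[Theorem 1.27]{Lu06b}). The genuine gap is in your step (i). You claim that, because the Gromov--Witten invariant is nonzero, a small \emph{generic perturbation $J_\varepsilon$ of the almost complex structure} (supported away from the ball) yields honest $J_\varepsilon$-holomorphic stable maps through points near $\phi(0)$, to which you then apply Gromov compactness. For a general symplectic manifold---precisely the non-semipositive case you yourself flag---this is false: transversality cannot be achieved by perturbing $J$ alone (multiply covered components with negative Chern number cannot be made regular this way), so the virtual invariant is not a count of $J_\varepsilon$-holomorphic maps for any generic $J_\varepsilon$, and its nonvanishing by itself produces no such maps. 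Supporting the perturbation away from $\phi(B^{2n}(r'))$ does not cure this, because the issue is the \emph{nature} of the perturbations used to define the virtual class (abstract, Kuranishi-type perturbations, whose solutions are not pseudoholomorphic for any almost complex structure), not their location.

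The correct way to close this step---and the point the paper explicitly isolates---is the structural property of the virtual fundamental class: $[\overline{\mathcal M}^X_{k}(A)]^{vir}$ lies in $H_d(\overline{\mathcal M}^X_{k}(A,J),\mathbb Q)$, i.e.\ it is carried by the actual compact moduli space of $J$-holomorphic stable maps, compatibly with the evaluation and forgetful maps; see \cite[Remark 4.3]{HLS}, which is why the paper fixes the definition of Gromov--Witten invariants as in \cite{Cas16}. Granting this, the argument is direct: if no $J$-holomorphic stable map in class $A$ passed through $\phi(0)$, then $ev_1\bigl(\overline{\mathcal M}^X_{k}(A,J)\bigr)$ would be a compact subset of $X$ missing $\phi(0)$, and representing $PD[pt]$ by a cocycle supported in a small neighbourhood of $\phi(0)$ would force $GW^{(X,\omega)}_{A,k}(PD[pt],\alpha_2,\ldots,\alpha_k;\beta)=0$, contradicting the hypothesis. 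This yields, for your chosen $J$ (standard on the ball), a genuine $J$-holomorphic stable map through $\phi(0)$ with no limiting argument needed; after that, your step (ii)---monotonicity for the proper holomorphic curve in $B^{2n}(r')$ through the origin, plus nonnegativity of the area of the remaining components---is standard and correct, giving $\pi (r')^2\leq\omega(A)$ and hence $w_G(X,\omega)\leq\omega(A)$.
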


To prove this theorem, the authors use the property that the virtual fundamental class lies in the rational homology group $\check{H}_{2d}(\overline{\mathcal M}^X_{k}(A,J), \mathbb Q)$; see~\cite[Remark 4.3]{HLS}.
This property is satisfied e.g. for the virtual fundamental class constructed in~\cite{Castellano}.

%%%%%%%%%%%%%%%%%%%%%%%%%%%%%%%%%%%%%%%%%%%%%%%%%%%%%%%%

\subsection{Minimal curves} Let us now recall some basic notions on  minimal rational curves from \cite[Chapter II.2]{Kol}. 

In this subsection, $X$ denotes a smooth projective complex algebraic variety.

Let $\mathrm{RatCurves}(X)$ denote the normalization of the space of rational curves on $X$. 
Every irreducible component $\mathcal K$ of $\mathrm{RatCurves}(X)$ is a (normal) quasi-projective variety equipped with a quasi-finite morphism to the Chow variety of $X$; the image consists of the Chow points of irreducible, generically reduced rational curves.
Every such $\mathcal K$ is called a family of rational curves on $X$.
There exist a universal family $p :\mathcal U \to \mathcal K$ and a projection $\mu :\mathcal U \to X$.
For any $x\in X$, let $\mathcal U_x=\mu^{-1}(x)$ and $\mathcal K_x=p(\mathcal U_x$).
A family $\mathcal K$ is called a \emph{covering family} if $\mu$ is dominant, i.e., $\mathcal K_x$ is non-empty for a general point $x\in X$. If in addition $\mathcal K_x$ is projective for a general point $x$, then $\mathcal K$ is called a \emph{minimal family}. 

A rational curve $f:\mathbb P^1\to X$ is \emph{free} 
if $H^1(\mathbb P^1, f^*T_X)=0$, where $T_X$ is the tangent bundle of $X$; 
see \cite[Definition II.3.1]{Kol}. 

Recall that a {\it very general point} of $X$ is a point outside a countable union of proper closed subvarieties of $X$.  
The two following theorems provide sufficient conditions for  free rational curves to exist.
\begin{theorem}[{\cite[Theorem II.3.11]{Kol}}]\label{thm:free curve}
Any rational curve passing through a very general point is free.
\end{theorem}

The next theorem can be derived from the proof of Theorem IV.1.9 in loc. cit. and \cite[Proposition 1.1]{KMM}.
Recall that a smooth complex projective variety $X$ is {\it uniruled} if for any point $x\in X$, there is a rational curve passing through $x$.

\begin{theorem}\label{thm:free+universal}
Let $\mathcal K$ be a family of rational curves in $X$. Then, $\mathcal K$ is a covering family if and only if there exists a (free) curve in $\mathcal K$ that passes through a very general point. Furthermore, if such a family exists, it is equivalent to $X$ being uniruled.
\end{theorem}

Following \cite{Hwa14}, we define the notion of minimal-degree covering families. Given an ample line bundle $\mathcal L$ on $X$. 
By $deg_{\mathcal L}(\mathcal K)$, we denote the degree of $\mathcal L$ on one hence all members of $\mathcal K$.
A covering family $\mathcal K$ is called a {\it minimal covering  with respect to $\mathcal L$}, if $deg_{\mathcal L}(\mathcal K)$ is minimal among all covering families of $X$. A covering family $\mathcal K$ is called a {\it minimal-degree covering} if it is minimal with respect to some ample line bundle, and in this case, any member of $\mathcal K$ is called a {\it minimal-degree curve}.

\begin{lemma}\label{lem:md}
The minimum of $deg_{\mathcal L}(\mathcal K)$ over all minimal families $\mathcal K$ of $X$ is attained for a minimal-degree covering family.
\end{lemma}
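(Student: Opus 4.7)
The plan is to prove that for the given ample line bundle $\mathcal L$,
\[
\min\{\deg_{\mathcal L}(\mathcal K) : \mathcal K \text{ minimal family of } X\} \;=\; \min\{\deg_{\mathcal L}(\mathcal K) : \mathcal K \text{ covering family of } X\},
\]
and that any family realizing the right-hand minimum is itself a minimal family. Both minima are finite positive integers because $\mathcal L$ is ample and $X$ is uniruled (so covering, hence minimal, families exist). Once this identification is made, a family attaining the left-hand side is a minimal family which also minimizes $\deg_{\mathcal L}$ among all covering families; by definition with $\mathcal L' = \mathcal L$, it is thus a minimal-degree covering family, proving the lemma.

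The inequality $\geq$ is tautological, since every minimal family is by definition a covering family. For the reverse, I would pick a covering family $\mathcal K^{*}$ achieving the right-hand minimum and argue that $\mathcal K^{*}_{x}$ is projective for a general $x \in X$, which would make $\mathcal K^{*}$ minimal. Assume for contradiction that $\mathcal K^{*}_{x}$ is not projective. Passing to the closure of $\mathcal K^{*}_x$ inside an appropriate compactification (Chow variety or component of $\mathrm{Hom}(\mathbb P^1,X)/\!/\mathrm{Aut}(\mathbb P^1)$), one finds a boundary point corresponding to a degenerate rational $1$-cycle $C_{1}+\cdots+C_{r}$ through $x$, either reducible ($r\geq 2$) or containing a component of multiplicity strictly larger than one. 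In either case some irreducible component $C_{i}$ passes through $x$ with $\deg_{\mathcal L}(C_{i}) < \deg_{\mathcal L}(\mathcal K^{*})$.

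By Theorem~\ref{thm:free+universal}, a general member of $\mathcal K^{*}$ is free, and the standard smoothing/deformation theory of free rational curves on smooth projective varieties developed in \cite[Chapter II]{Kol} produces from $C_{i}$ an irreducible family of rational curves that dominates $X$. This is a covering family of $\mathcal L$-degree $\deg_{\mathcal L}(C_{i}) < \deg_{\mathcal L}(\mathcal K^{*})$, contradicting the minimality of $\mathcal K^{*}$ among covering families. Hence $\mathcal K^{*}_{x}$ is projective for general $x$, the two minima coincide, and $\mathcal K^{*}$ witnesses the lemma.

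The main obstacle lies in this bend-and-break extraction step: one must go from the qualitative failure of projectivity of $\mathcal K^{*}_x$ to the quantitative production of a \emph{covering} family of strictly smaller degree, and carefully handle both the reducible and non-reduced possibilities for the boundary cycles. These are precisely the issues addressed by Kollár's deformation theory of free rational curves, so the argument amounts to invoking those results in the right order; no new technical ingredient beyond them is needed.
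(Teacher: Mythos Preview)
Your overall strategy is sound and is in fact the standard bend-and-break argument behind the result the paper simply cites: the paper's own proof is two lines, invoking \cite[Section~3]{Hwa14} for the statement that every minimal-degree covering family is a minimal family, and then observing that minimal families are covering by definition. You are essentially reproving that cited fact from scratch, which makes your argument more self-contained but considerably longer than what the paper needs.

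There is, however, a genuine slip in your degeneration step. You write that by Theorem~\ref{thm:free+universal} a general member of $\mathcal K^{*}$ is free, and that ``the standard smoothing/deformation theory \ldots\ produces from $C_{i}$ an irreducible family of rational curves that dominates $X$.'' But the freeness of general members of $\mathcal K^{*}$ is irrelevant here: $C_{i}$ is not a member of $\mathcal K^{*}$, and nothing you have said forces $C_{i}$ to be free or to deform in a dominating family. The correct argument is to choose $x$ \emph{very general} from the outset; then any rational curve through $x$ is free by \cite[Theorem~II.3.11]{Kol}, so in particular $C_{i}$ is free, and a free curve deforms to cover $X$. With this fix your contradiction goes through. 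You should also be explicit that the limiting cycle is connected and passes through $x$, so that at least one irreducible component $C_i$ actually contains $x$; this is standard but worth stating since the strict inequality $\deg_{\mathcal L}(C_i)<\deg_{\mathcal L}(\mathcal K^{*})$ depends on it.
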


\begin{proof}
By~\cite[Section 3]{Hwa14}, any minimal-degree covering family is a minimal family.
Moreover, a minimal family is a covering family by definition. The lemma thus follows. 
\end{proof}

\begin{theorem}[{\cite[Theorem IV.2.10]{Kol}}]\label{thm:existence of minimal}
Minimal covering families exist on any uniruled variety.
\end{theorem}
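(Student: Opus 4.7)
The plan is to exhibit a minimal covering family as one of minimum $\mathcal{L}$-degree among all covering families, for any fixed ample line bundle $\mathcal{L}$ on the uniruled projective variety $X$. Since $X$ is uniruled, it carries a free rational curve, and the deformations of such a curve sweep out $X$, giving at least one covering family of rational curves on $X$ (this is exactly the mechanism behind Theorem~\ref{thm:free+universal}). Consequently, the set
$\{\deg_{\mathcal{L}}(\mathcal{K}) : \mathcal{K}\text{ a covering family of rational curves on }X\}$
is a nonempty subset of $\mathbb{Z}_{>0}$ and admits a minimum $d_0$. I would then pick a covering family $\mathcal{K}$ with $\deg_{\mathcal{L}}(\mathcal{K}) = d_0$ and argue that such a $\mathcal{K}$ is automatically minimal, i.e.~that $\mathcal{K}_x$ is projective for general $x\in X$.

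The heart of the matter is this projectivity claim, which I would establish by contradiction. Suppose $\mathcal{K}_x$ fails to be projective for a general $x$. Then the closure of $\mathcal{K}_x$ inside the relevant compactified moduli space must contain a boundary point corresponding to a reducible stable $1$-cycle $C = \sum_i n_i C_i$ of total $\mathcal{L}$-degree $d_0$ passing through $x$, every component $C_i$ having $\mathcal{L}$-degree strictly less than $d_0$, and at least one component $C_{i_0}$ passing through $x$. To extract a contradiction I would invoke the boundedness of $1$-cycles on $X$ of bounded $\mathcal{L}$-degree: the locus in $\mathrm{RatCurves}(X)$ parametrizing rational curves of $\mathcal{L}$-degree at most $d_0$ has only finitely many irreducible components, producing a finite list $\mathcal{K}^{(1)},\dots,\mathcal{K}^{(N)}$ of families of rational curves on $X$ of degree strictly less than $d_0$. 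If none of these were a covering family, the union of their images in $X$ would form a proper closed subset $Z \subsetneq X$; then for a general $x$ outside $Z$, no rational curve of degree $<d_0$ could pass through $x$, contradicting the existence of the component $C_{i_0}$. Hence some $\mathcal{K}^{(i)}$ is a covering family of $\mathcal{L}$-degree strictly less than $d_0$, contradicting the minimality of $d_0$.

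The main obstacle I expect is making precise the boundedness input: that the locus in the Chow variety of $X$ parametrizing rational curves of bounded $\mathcal{L}$-degree has finitely many irreducible components, and that each component has a well-defined image in $X$ which is closed. This is a standard but nontrivial fact about Chow schemes coupled with the observation that rationality of the generic member is a constructible condition on a family of curves. Once this is granted, the rest is a clean general-position argument, requiring no further deformation-theoretic input beyond the free-curve existence built into Theorem~\ref{thm:free+universal}.
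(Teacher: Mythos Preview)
The paper does not supply its own proof of this theorem; it is quoted with a citation to \cite[Theorem IV.2.10]{Kol} and used as a black box. Your proposal reproduces precisely the standard argument one finds in Koll\'ar's book: pick a covering family of minimum $\mathcal{L}$-degree and show by a degeneration plus boundedness argument that $\mathcal{K}_x$ must be proper for general $x$, lest a component of a limit cycle produce a covering family of strictly smaller degree.

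The outline is correct. Two small points you should tighten when writing it out. First, the limiting cycle need not be reducible in the strict sense---it may be an irreducible curve taken with multiplicity $n>1$; your notation $\sum_i n_i C_i$ already covers this, but the word ``reducible'' should be replaced by ``not an integral rational curve''. Second, you implicitly use that each component $C_i$ of the limit is again rational, so that it lives in $\mathrm{RatCurves}(X)$ and hence in one of your finitely many $\mathcal{K}^{(j)}$; this holds because rationality specializes in flat families of curves, or, more transparently, because one may compactify via the Kontsevich space $\overline{\mathcal{M}}_{0,0}(X,\beta)$, whose boundary points are stable maps from trees of $\mathbb{P}^1$'s. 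With these two clarifications the argument goes through as you describe.
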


%%%%%%%%%%%%%%%%%%%%%%%%%%%%%%%%%%%%%%%%%%%%%%%%%

\subsection{Proof of Theorem \ref{thm:main1}}
We first reduce the proof to the case of an integral Kähler class; this mainly follows from the lower semicontinuity of the Gromov-width as a function on Kähler forms. This property is known by the experts; we recall the main idea of its proof for convenience.

\begin{lemma}\label{lemma:semicont}
Let $(M,\omega)$ be a compact symplectic manifold.
The function $\omega\mapsto w_G(M,\omega)$ is lower semicontinuous
on the space of symplectic forms on $M$ equipped with the $\mathcal C^1$-topology.
\end{lemma}

\begin{proof}
Let $a$ be the Gromov width of $(M,\omega)$. Note that $a>0$ by Darboux Theorem.
For each $\epsilon>0$, let $\psi_\epsilon:(B_{a-\epsilon}, \omega_{std})\rightarrow (M, \omega)$ denote a symplectic embedding of a ball of capacity $a-\epsilon$.
Let $\mathcal U$ be a $\mathcal C^1$-neighbourhood of $\omega$
such that the $2$-form $\Omega_{\tau,t,\epsilon}=\psi_\epsilon^*(\omega)+t\psi_\epsilon^*(\tau-\omega)$ of $B_{a-\epsilon}$ 
is symplectic for each $0\leq t\leq 1$ and each $\tau\in \mathcal U$.
Note that $\psi_\epsilon^*(\tau-\omega)$ is exact by Poincar\'e Lemma.
Taking $\epsilon>0$ and $\epsilon'<\epsilon$, we now apply a Moser type argument to each family 
$\mathfrak F_{\tau}=\{\Omega_{\tau,t,\epsilon'}: 0\leq t\leq 1\}$, with $\tau\in\mathcal U$, 
as in the proof of~\cite[Proposition 11.1]{EV} (see also the end of the proof of~\cite[Proposition 14]{MP}). 
We thus get 
a $\mathcal C^1$-neighbourhood $\mathcal{U'}$ of $\omega$ contained in $\mathcal U$ such that for all $\tau\in\mathcal U'$ there exists 
a smooth embedding $\phi_\tau: B_{a-{\epsilon''}}\rightarrow B_{a-\epsilon'}$ with $\phi_\tau^*(\Omega_{\tau,1,\epsilon'})=\Omega_{\tau,0,\epsilon'}(=\omega_{std})$ for some $\epsilon''=\epsilon''(\tau)$ satisfying $\epsilon'<\epsilon''<\epsilon$. 
From this, for each $\tau\in\mathcal U'$ we obtain a symplectic embedding 
$\psi_{\epsilon'} \circ\phi_\tau:(B_{a-\epsilon''},\omega_{std})\rightarrow (M,\tau)$.
This implies the lower semicontinuity of the Gromov width.
\end{proof}

Let $a$ be the Gromov width of $(X,\omega)$.
%, the Gromov width of $(X,\tau)$ is upper bounded by $\tau(C)$ for any minimal curve $C$ of $X$.
Then thanks to Lemma~\ref{lemma:semicont} and the projectivity of $X$, for any given $\epsilon>0$, there exists a $\mathcal C^1$-neighbourhood $\mathcal U(\epsilon)$ of $\omega$ and a Kähler form $\tau\in\mathcal U(\epsilon)$ with $[\tau]\in H^2(X,\mathbb Q)$ such that 
the Gromov width of $(X,\tau)$ is greater or equal to $a-\epsilon$. 
Therefore, if the theorem holds for K\"ahler forms with classes in $H^2(X,\mathbb Q)$,
the inequality $a-\varepsilon\leq \int_C\tau$ holds for any minimal curve $C$ and so does the inequality $a\leq \int_C\omega$.
We are thus left to prove the theorem for rational K\"ahler classes $[\omega]$ and even only for integral ones thanks to the conformality of the Gromov width.

Let us now consider the case of Kähler forms $\omega$ with $[\omega]\in H^2(X,\mathbb Z)$.
Thanks to Lemma~\ref{lem:md}, for any minimal curve $C$ of $X$, there exists a minimal-degree curve $C_0$ of $X$ such that $\int_{C_0}\omega\leq \int_{C}\omega$.
Moreover, since any covering (hence minimal) family contains a (free) curve passes through a very general point, as recalled in Theorem~\ref{thm:free+universal},
it suffices to prove Theorem \ref{thm:main1} for minimal-degree curves which are free.
Furthermore, by Theorem~\ref{thm:Gromov}, we are left to prove the following theorem to obtain Theorem \ref{thm:main1}.

\begin{theorem}\label{thm:GW-integral}
Let $(X,\omega)$ be a
projective K\"ahler manifold. Let $C$ be a (free) curve on $X$ that passes through a very general point 
and it is also a minimal-degree curve.
Then
$$
GW^{(X,\omega)}_{[C], k}(PD[pt], \alpha_2, \ldots, \alpha_k;[pt])\neq 0  
$$
for some $k$, $\alpha_i\in H^*(X, \mathbb Q)$ and with the last $[pt]$ being the point class of $[\overline{\mathcal M}_k]$.
\end{theorem}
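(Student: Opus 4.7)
The plan is to realise the desired Gromov--Witten invariant as an honest enumerative count of free rational curves, exploiting that $C$ is free (to get unobstructed deformations and agreement of the virtual and the ordinary fundamental class on the free-curve locus) and of minimal degree (to kill any contribution from boundary strata parametrising reducible stable maps).

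Set $n:=\dim X$ and $c:=c_1(X)\cdot[C]$; since $C$ lies in a covering family and is free, one has $c\geq 2$. I would take $k:=c+1\geq 3$, fix a very ample divisor class $[D]\in H^2(X,\mathbb Q)$, and set $\alpha_1:=PD[pt]$ together with $\alpha_2=\cdots=\alpha_k:=[D]$. A direct computation shows the real degrees of these insertions together with that of $PD[pt]\in H^{2(k-3)}(\overline{\mathcal M}_k)$ add up to $2n+2(k-1)+2(k-3)=2(n+c+k-3)$, matching the expected real dimension of $\overline{\mathcal M}^X_k([C],J)$, so the invariant is well-defined and non-trivial at the level of dimensions.

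Since $C$ is free, $H^1(\mathbb P^1,f^*T_X)=0$ for its parameterising map $f$, so an analytic neighbourhood of $[C]$ in $\overline{\mathcal M}_{0,k}^X([C])$ is smooth of the expected dimension and its virtual fundamental class agrees there with the ordinary one. I then pick a general point $x\in X$, general divisors $D_2,\ldots,D_k\in|D|$, and a general cross-ratio $\tau\in\overline{\mathcal M}_k$. The free-curve locus is Zariski open in the deformation space of $C$ and, combined with Theorem~\ref{thm:free+universal} and Lemma~\ref{lem:md}, covers $X$; so by freeness the evaluation at the additional marked points is submersive at such a deformation. It follows that
\[
(ev^k)^{-1}(\{x\}\times D_2\times\cdots\times D_k)\cap\pi^{-1}(\tau),
\]
restricted to the irreducible component containing $[C]$, is a non-empty, transverse, zero-dimensional set, each point contributing $+1$ by complex orientation.

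The remaining task is to show that boundary strata cannot cancel this count. A boundary point corresponds to a stable map whose image decomposes as $\sum_j a_j[C_j]$ with each $C_j$ rational of lower $\mathcal L$-degree than $C$. By the minimal-degree hypothesis and Lemma~\ref{lem:md}, none of the classes $[C_j]$ lies in a covering family, so a generic member cannot be deformed to contain a generic point $x\in X$ and for generic choices of data the cycle $(ev^k)^{-1}(\{x\}\times D_2\times\cdots\times D_k)$ is disjoint from every such stratum. This is the key point where minimality enters, and is where I expect the main technical obstacle to lie: one must control every combinatorial decomposition of $[C]$ into rational sub-classes (including configurations with multiple covers and contracted ``ghost'' components) and check each is avoided by generic insertions. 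A secondary technicality---matching the virtual and ordinary fundamental classes on the smooth free-curve locus---is standard Behrend--Fantechi theory once the Kodaira--Spencer vanishing is in place.
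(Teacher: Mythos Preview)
Your strategy is the same as the paper's: freeness makes the relevant moduli smooth of the expected dimension so that the virtual class agrees with the ordinary one, and minimality of the $\mathcal L$-degree is used to force the point-constrained locus away from the boundary, reducing the Gromov--Witten invariant to an honest positive integral. The packaging differs only cosmetically. Instead of intersecting with generic divisors $D_2,\ldots,D_k$ and a generic cross-ratio $\tau$, the paper shows that $ev^{k-1}$ restricted to $\mathcal M_{pt}:=ev_1^{-1}(x)$ is an immersion onto a compact complex submanifold of $X^{k-1}$, and then simply takes the $\alpha_i$ to be suitable powers of $\omega$ so that $\int_{ev^{k-1}(\mathcal M_{pt})}\alpha_2\times\cdots\times\alpha_k>0$ by positivity of the K\"ahler form. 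This bypasses your transversality discussion for the divisorial insertions.

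There is, however, a genuine imprecision in your boundary argument. It is not true that every component $C_j$ of a reducible stable map has strictly smaller $\mathcal L$-degree than $C$: one component can carry the full class $[C]$ while the others are contracted (ghosts). The correct statement, and the one actually used in Koll\'ar's and Ruan's arguments, is that the component whose image contains the \emph{generic} point $x$ must lie in a covering family, hence by minimality has $\mathcal L$-degree at least $\deg_{\mathcal L}C$; this forces equality and makes every other component a ghost. Those ghost configurations are then ruled out by your generic $\tau\in\mathcal M_k$, since any such nodal domain maps under $\pi$ into $\partial\overline{\mathcal M}_k$. You essentially flag this as the ``main technical obstacle'', but as written your sentence ``each $C_j$ rational of lower $\mathcal L$-degree than $C$'' is false and should be replaced by the argument just described.
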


The above theorem is essentially due to Koll\'ar and Ruan; see the proof of \cite[Theorem 4.2.10]{Kol98} and of \cite[Proposition 4.9]{Ruan}. 
As a sake of convenience and since we deal with different definitions of Gromov-Witten invariants, we outline the proof.

%%%%%%%%%%%%%%%%%%%%%%%%%%%%

\begin{proof}
First we claim that the curve $C$ is irreducible.
Indeed, if $C'$ is an irreducible component of $C$ then $C'$ is also free 
hence it is contained in a covering family
by Theorem \ref{thm:free+universal}. 
Let $\mathcal L$ be a very ample line bundle with respect to which 
$C$ is a minimal-degree curve.  
As $\mathcal L$ is very ample, it follows that $\mathcal L \cdot C'> 0$ and in turn, $\mathcal L \cdot C' < \mathcal  L \cdot C$ whenever $C\neq C'$. The claim thus follows.

Let $J$ denote the complex structure of the K\"ahler manifold $(X,\omega)$.
Let
$
\overline{\mathcal{M}}_{pt}
$
consist of the equivalences classes of the stable $J$-holomorphic curves $[(u,z_1,\ldots,z_k)]$ in $\overline{\mathcal M}^X_{k}([C],J)$ such that $u(z_1)$ is a given very general point of $X$
and $\mathcal{M}_{pt}\subset\overline{\mathcal{M}}_{pt}$ be 
given by the equivalent classes of $J$-holomorphic spheres in the top stratum.
Note that $\mathcal{M}_{pt}$ is not empty thanks to the assumption on $C$.
Since $\mathcal{M}_{pt}$ consists of free rational curves, it is a smooth manifold and it has the expected dimension, that is 
$d-\dim X$; see e.g. §3.3 in~\cite{MDS}.
Therefore $[\overline{\mathcal M}_{pt}]^{vir}=[\overline{\mathcal M}_{pt}]$. 

Besides, the curve $C$ being irreducible, the class $[C]$ is indecomposable (in the sense of \cite[Definition 7.1.7]{MDS}) hence
the evaluation map $ev_{k-1}:\mathcal M_{pt}\rightarrow X^{k-1}$ at the last $k-1$ points  defines a pseudo-cycle (see~Lemma 7.1.8 in~\cite{MDS}). Therefore
\begin{equation}\label{eq:choicecocycles}
\int_{ev_{k-1*}([\overline{\mathcal M}_{pt}])} \alpha_2\times\ldots\times\alpha_k
=\int_{ev_{k-1*}([\mathcal M_{pt}])} \alpha_2\times\ldots\times\alpha_k
\end{equation}
where $\alpha_2,\ldots,\alpha_k$ are cohomology classes of $X$.
We further choose these classes such that
\begin{equation}\label{eq:choicecocycles2}
\int_{ev_{k-1*}([\mathcal M_{pt}])} \alpha_2\times\ldots\times\alpha_k\neq 0.
\end{equation}
One may take for instance powers of the integral K\"ahler form $\omega$ for the cocycles $\alpha_2,\ldots,\alpha_k$ such that they satisfy the dimension condition. 

Finally, note that the following equalities hold:
\begin{equation*}
\begin{split}
GW^{(X,\omega)}_{[C], k}(PD[pt],\alpha_2, \ldots,\alpha_k;[pt])
& =\int_{ev^{k}_*([\overline{\mathcal M}^X_{k}([C])]^{vir})} PD[pt]\times\alpha_2\times\ldots\times \alpha_k \\
& =\int_{ev_{k-1}*([\overline{\mathcal M}_{pt}]^{vir})} \alpha_2\times\ldots\times\alpha_k.
\end{split}
\end{equation*}
This combined with Equations (\ref{eq:choicecocycles}) and (\ref{eq:choicecocycles2}) yields 
$GW^{(X,\omega)}_{[C], k}(PD[pt],\alpha_2, \ldots,\alpha_k;[pt])\neq 0$.
\end{proof}

\begin{remark}
As shown in the above proof, the Gromov-Witten invariant considered in Theorem~\ref{thm:GW-integral} can be expressed via pseudo-cycles
only. Therefore, in order to derive Theorem~\ref{thm:main1}, we could have applied a theorem of Castro's and more precisely the arguing of the proof of Theorem 4.5 together with Remark 4.6 in ~\cite{Cas}.
\end{remark}

%%%%%%%%%%%%%%%%%%%%%%%%%%%%%%%%%%%%%%%%%%%%%%%%%%%%%%

\section{Upper bounds for the Gromov width of toric varieties}
\label{sec:Proof2}

The main goal of this section is to prove Theorem~\ref{thm:toric}.
We start by recalling a few basic results on toric varieties as well as the classification of minimal rational curves on toric manifolds (see Theorem \ref{thm:Fu}).
We conclude this section while comparing our results with ones previously obtained.

\subsection{Free curves on toric varieties} 
Let us recall some notions on toric varieties. We refer to \cite{cox} for more details.
 
 Let $N$ be the lattice of one-parameter subgroups of the torus $T=(\mathbb C^*)^n$ and let $M$ be 
 the lattice of characters of $T$. Set $M_{\mathbb R}:=M\otimes_{\mathbb Z} \mathbb R$ and 
 $N_{\mathbb R}:=N\otimes_{\mathbb Z} \mathbb R$. We have a natural bilinear pairing 
  $$\langle-,-\rangle:M_{\mathbb R}\times N_{\mathbb R}\to \mathbb R.$$
  
Given a fan $\Sigma$ in $N_{\mathbb R}$, let $\Sigma(1)$ be the set of one-dimensional cones in the fan $\Sigma$.
To such a fan $\Sigma$, we can associate a toric $T$-variety that we denote by $X$ below. For each $\rho\in \Sigma(1)$, let $D_{\rho}$ be the associated $T$-invariant prime divisor in $X$.
The group $TDiv(X)$ of  $T$-invariant divisors in $X$ is given by 
\begin{equation}\label{eq:divisor}
TDiv(X)=\bigoplus_{\rho\in\Sigma(1)}\mathbb ZD_{\rho}.
\end{equation}
Let $Cl(X)$ be the divisor class group of $X$.
We have an exact sequence 
$$
M\overset{\alpha}{\longrightarrow}\mathbb Z^{|\Sigma(1)|}\overset{\beta}{\longrightarrow} Cl(X)\to 0
$$
where $\alpha(m)=(\langle m,\eta_{\rho}\rangle)_{\rho\in \Sigma(1)}$ and $\beta$ maps the standard basis element $e_{\rho}\in \mathbb Z^{|\Sigma(1)|}$ to $[D_{\rho}]\in Cl(X)$; see \cite[Theorem 4.1.3]{cox}.

By $N_1(X)$ we denote the group of numerical classes of 1-cycles of the variety $X$ and by $Pic(X)$ the Picard group of $X$. Let $N_1(X)_{\mathbb R}:=N_1(X) \otimes_{\mathbb Z} \mathbb R$ and $Pic(X)_{\mathbb R}=Pic(X)\otimes_{\mathbb Z}\mathbb R$.
\begin{proposition}(\cite[Proposition 6.4.1]{cox})\label{ses} Let $X$ be a smooth complete toric $T$-variety. The following sequence
$$
 0\longrightarrow M_{\mathbb R} \overset{\alpha}{
\longrightarrow} \mathbb R^{|\Sigma(1)|} \overset{\beta}{\longrightarrow} Pic(X)_{\mathbb R}\longrightarrow 0
$$  
is exact and so is its dual
$$ 
0\longrightarrow N_1(X)_{\mathbb R} \overset{\beta^*}{\longrightarrow}  \mathbb R^{|\Sigma(1)|} \overset{\alpha^*}{\longrightarrow} N_{\mathbb R} \longrightarrow 0.
$$

Furthermore, given $D=\sum \limits_{\rho\in \Sigma(1)} \kappa_{\rho}D_{\rho}$ and a relation $\sum \limits_{\rho \in \Sigma(1)} a_{\rho}\eta_{\rho}=0$, the intersection pairing of $[D]\in Pic(X)_{\mathbb R}$ and $R=(a_{\rho})_{\rho\in \Sigma(1)}\in N_1(X)_{\mathbb R}$ is 
\begin{equation}\label{eq:intersection}
    D \cdot R=\sum \limits_{\rho\in \Sigma(1)} \kappa_{\rho}a_{\rho}.   
    \end{equation}
\end{proposition}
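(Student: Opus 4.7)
The plan is to derive both exact sequences and the intersection formula from the sequence
$$M \overset{\alpha}{\longrightarrow} \mathbb Z^{|\Sigma(1)|} \overset{\beta}{\longrightarrow} Cl(X) \to 0$$
quoted just before the proposition. First I would use smoothness of $X$ to identify $Cl(X)=Pic(X)$. Tensoring with $\mathbb R$ preserves right-exactness, so only injectivity of the extended map $\alpha: M_\mathbb R \to \mathbb R^{|\Sigma(1)|}$ remains to prove. If $\alpha(m) = 0$, then $\langle m, \eta_\rho\rangle = 0$ for every $\rho \in \Sigma(1)$; since $X$ is complete, the support of $\Sigma$ is all of $N_\mathbb R$, so the vectors $\{\eta_\rho\}_{\rho\in\Sigma(1)}$ span $N_\mathbb R$, forcing $m = 0$. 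This yields the first short exact sequence.

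For the dual sequence I would take the $\mathbb R$-linear dual of the first one, using $M_\mathbb R^* = N_\mathbb R$, the self-duality of $\mathbb R^{|\Sigma(1)|}$ through the standard basis $\{e_\rho\}$, and the identification $Pic(X)_\mathbb R^* \cong N_1(X)_\mathbb R$ furnished by the intersection pairing. Tracing $\alpha$ through these identifications produces the transpose $(a_\rho) \mapsto \sum_\rho a_\rho \eta_\rho$, which is exactly the stated $\alpha^*$; exactness is then automatic from dualizing a short exact sequence of finite-dimensional vector spaces. The intersection formula becomes a tautology under these identifications: if $[D]=\beta((\kappa_\rho))$ and $R \in N_1(X)_\mathbb R$ is represented by $(a_\rho) \in \mathbb R^{|\Sigma(1)|}$ via $\beta^*$, then the compatibility of $\beta$ and $\beta^*$ with the standard pairing on $\mathbb R^{|\Sigma(1)|}$ gives
$$[D]\cdot R = \langle (\kappa_\rho),(a_\rho)\rangle = \sum_{\rho\in\Sigma(1)} \kappa_\rho\, a_\rho.$$

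The main obstacle is the perfect-pairing identification $Pic(X)_\mathbb R^* \cong N_1(X)_\mathbb R$. For a smooth complete toric variety this rests on two facts: numerical, linear and algebraic equivalence of $\mathbb R$-divisors all coincide, and $N_1(X)_\mathbb R$ is generated by classes of torus-invariant curves, whose intersection numbers with the $D_\rho$ are read off precisely from relations $\sum_\rho a_\rho \eta_\rho = 0$. Granted these, non-degeneracy of the intersection pairing follows from the exactness of the primal sequence established in the first step, and the proposition drops out.
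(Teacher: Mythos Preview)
The paper does not prove this proposition at all: it is stated with a citation to \cite[Proposition 6.4.1]{cox} and used as a black box. Your argument is the standard one and is correct; it is essentially the proof given in the cited reference, so there is nothing to compare.
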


The above proposition shows in particular that $N_1(X)_{\mathbb R}$ can be interpreted as the space of linear relations among the $\eta_p$'s.

\begin{lemma}\label{lemma:curve} Let $X$ be a smooth complete toric $T$-variety.
To any relation $\sum_{\rho\in \Sigma(1)} a_{\rho}\eta_{\rho}= 0$ with $(a_{\rho})_{\rho\in \Sigma(1)}\in\mathbb Z^{|\Sigma(1)|}_{\geq 0}$ non equal to $\bf 0$, there corresponds a free irreducible rational curve of $X$ passing through a very general point. 
\end{lemma}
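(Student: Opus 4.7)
The plan is to construct the curve explicitly via the Cox geometric quotient presentation of $X$ and then verify freeness with the help of the generalized Euler sequence for the toric tangent bundle. Recall that $X$ is the geometric quotient $(\mathbb{C}^{\Sigma(1)}\setminus Z(\Sigma))/G$, where $G=\mathrm{Hom}(\mathrm{Cl}(X),\mathbb{C}^*)$ and $Z(\Sigma)$ consists of those tuples $(x_\rho)$ for which $\{\rho : x_\rho=0\}$ is not contained in $\sigma(1)$ for any $\sigma\in\Sigma$.

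First I would define $f\colon\mathbb{P}^1\to X$ in Cox coordinates by choosing, for each $\rho\in\Sigma(1)$ with $a_\rho>0$, a generic homogeneous polynomial $f_\rho\in\mathbb{C}[s,t]$ of degree $a_\rho$, and setting $f_\rho=1$ when $a_\rho=0$. For generic choices the finite zero loci of the various $f_\rho$ on $\mathbb{P}^1$ are pairwise disjoint, so at every $[s:t]\in\mathbb{P}^1$ the set $\{\rho : f_\rho(s,t)=0\}$ is either empty or a singleton $\{\rho_0\}$; both the trivial cone and the ray $\rho_0$ belong to $\Sigma$, so the tuple avoids $Z(\Sigma)$ and descends to a genuine morphism $f\colon\mathbb{P}^1\to X$. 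The image is then an irreducible rational curve, non-constant as soon as the relation is non-trivial.

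To prove freeness, observe that by the intersection formula in Proposition~\ref{ses} the pullback $f^*\mathcal{O}_X(D_\rho)$ has degree $D_\rho\cdot[f(\mathbb{P}^1)]=a_\rho$ on $\mathbb{P}^1$, hence is isomorphic to $\mathcal{O}_{\mathbb{P}^1}(a_\rho)$. I would then pull back the generalized Euler sequence for the toric tangent bundle (see \cite{cox})
$$
0 \longrightarrow \mathcal{O}_X^{\,\mathrm{rk}\,\mathrm{Pic}(X)} \longrightarrow \bigoplus_{\rho\in\Sigma(1)} \mathcal{O}_X(D_\rho) \longrightarrow T_X \longrightarrow 0
$$
along $f$, obtaining the short exact sequence
$$
0 \longrightarrow \mathcal{O}_{\mathbb{P}^1}^{\,\mathrm{rk}\,\mathrm{Pic}(X)} \longrightarrow \bigoplus_{\rho\in\Sigma(1)} \mathcal{O}_{\mathbb{P}^1}(a_\rho) \longrightarrow f^*T_X \longrightarrow 0.
$$
Since $a_\rho\geq 0$ for every $\rho$, the two left-hand terms have vanishing $H^1$ on $\mathbb{P}^1$, and the long exact cohomology sequence then forces $H^1(\mathbb{P}^1,f^*T_X)=0$. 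By \cite[Definition II.3.1]{Kol} this is exactly freeness of $f$.

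The main obstacle is really the first step: certifying that the tuple $(f_\rho)$ indeed gives a morphism to $X$, i.e.\ misses the irrelevant locus $Z(\Sigma)$. This is where the combinatorial hypothesis $a_\rho\in\mathbb{Z}_{\geq 0}$ and the genericity of the $f_\rho$ interact; concretely, one must rule out any point of $\mathbb{P}^1$ on which two distinct $f_\rho$'s vanish simultaneously, since such a pair of rays need not span a cone of $\Sigma$. Once that well-definedness is secured, the freeness half of the statement follows essentially automatically from the non-negativity of the $a_\rho$ and the Euler sequence.
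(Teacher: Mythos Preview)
Your argument is correct but takes a genuinely different route from the paper's. The paper constructs the curve inside the open torus $T\subset X$ by the rational map
\[
\tilde f(t)=\prod_{\rho\in\Sigma(1)}\lambda_{\eta_\rho}(t-c_\rho)^{a_\rho}
\]
with pairwise distinct constants $c_\rho$, extends it to $f:\mathbb{P}^1\to X$ by completeness, and reads off $D_\rho\cdot C=a_\rho$ from the order-of-vanishing computation at $t=c_\rho$; freeness is then deduced indirectly, by observing that $C$ meets the big torus, hence can be taken to pass through a very general point of $X$, and appealing to \cite[Theorem~II.3.11]{Kol}. You instead work in Cox homogeneous coordinates and prove freeness explicitly via the pulled-back Euler sequence. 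What your approach buys is a completely toric, cohomological verification of $H^1(\mathbb{P}^1,f^*T_X)=0$ that dispenses with the ``very general point'' input from \cite{Kol} and makes the role of the hypothesis $a_\rho\geq 0$ transparent (it is exactly what forces each $\mathcal{O}_{\mathbb{P}^1}(a_\rho)$ to be acyclic). The paper's construction, on the other hand, follows \cite{Sam} more closely and stays entirely in classical toric language (one-parameter subgroups and limit points), avoiding the Cox quotient machinery; it also makes the intersection numbers $D_\rho\cdot C=a_\rho$ visible geometrically rather than through the functor-of-points identification $f^*\mathcal{O}_X(D_\rho)\cong\mathcal{O}_{\mathbb{P}^1}(a_\rho)$. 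Both constructions of $f$ ultimately produce the same curve, and the well-definedness check you single out (disjoint zero loci for generic $f_\rho$, so the Cox tuple misses $Z(\Sigma)$) is the Cox-side analogue of the paper's choice of distinct $c_\rho$.
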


This lemma is essentially \cite[Proposition 2]{Sam} although there is no mention of free curve therein. As a sake of convenience, we give the proof below.

\begin{proof} 
Take a relation $\sum_{\rho\in \Sigma(1)} a_{\rho}\eta_{\rho}= 0$.
We claim that there exists a curve $C$ of $X$ such that 
\begin{equation}\label{eq:Payne}
D_{\rho}\cdot C = a_{\rho} \quad \text{for all} \quad \rho\in \Sigma(1).
\end{equation}
This curve $C$ is constructed as follows by mimicking the proof of~\cite[Proposition 2]{Sam}.
Given $\rho\in \Sigma(1)$, let $\lambda_{\eta_{\rho}}: \mathbb C^*\to T$ be the one-parameter subgroup of $T$ associated to the primitive vector $\eta_{\rho}$. Namely, $\lambda_{\eta_{\rho}}(t)=(t^{{\eta_{\rho}}_1},\ldots, t^{{\eta_{\rho}}_n})$ where $\eta_{\rho}:=({\eta_{\rho}}_1, \ldots, {\eta_{\rho}}_n)$. 
For each $\rho\in \Sigma(1)$, take $c_{\rho}\in \mathbb C$ such that the scalars $c_{\rho}$ are all distinct. 

Consider the rational map $\tilde f: \mathbb A^1\dashrightarrow T$ defined by 
\begin{equation}\label{eq:ratmap}
     \tilde  f(t)=\prod_{\rho\in \Sigma(1)}\lambda_{\eta_{\rho}}(t-c_{\rho})^{a_{\rho}}.
 \end{equation}
Since $X$ is complete, the map $\tilde f$ extends to a regular morphism $f:\mathbb P^1 \to X$. 

Let $C:= f(\mathbb P^1)$. We shall now prove that $C$ is the required curve.  First, it is clear that $C$ is an irreducible rational curve in $X$. Moreover, note that since $\eta_{\rho}$ is the primitive vector of the ray $\rho$, we have $\lim_{t\to 0}\lambda_{\eta_{\rho}}(t)\in D_{\rho}$ (see \cite[Proposition 3.2.2]{cox}). Using Equation (\ref{eq:ratmap}) together with the equality $X\setminus T=\cup_{\rho\in \Sigma(1)} D_{\rho}$, we obtain the following assertions:
\begin{itemize}
\item  If $a_{\rho}=0$, then $C \cap D_{\rho}=\emptyset$.
\item  If $a_{\rho}>0$, then $C\cap D_{\rho}= f(c_{\rho})$ with $a_{\rho}$ being the multiplicity at $f(c_{\rho})$. 
\end{itemize}

By construction, the curve $C$ passes through a very general point of $X$.
Therefore, by Theorem \ref{thm:free curve}, the rational curve $C$ is free.
Moreover, $C$ corresponds to the given relation by Equations~(\ref{eq:intersection}) and~(\ref{eq:Payne}). The curve $C$ itself thus yields the desired curve.
\end{proof}

\subsection{Minimal curves on toric varieties}
We now recall the combinatorial description of  minimal rational curves on smooth complete toric varieties obtained in~\cite{CFH}. 
We shall need the following notation attached to a complete fan $\Sigma$. 

\begin{definition}[{\cite{Bat}}] \label{def:primcoll}
A non-empty subset $\mathfrak P=\{x_1, \ldots, x_k\}$ of $\Sigma(1)$ is called a \emph{primitive collection} if, 
for any $1\leq i\leq k$, the set $\mathfrak P\setminus \{x_i\}$ generates a $(k-1)-$dimensional cone in $\Sigma$, but $\mathfrak P$ does not generate a $k$-dimensional cone in $\Sigma$.  
\end{definition}
For a primitive collection $\mathfrak P=\{x_1, \ldots, x_k\}$  of $\Sigma(1)$, let $\sigma(\mathfrak P)$ be the unique cone in $\Sigma$ that contains $x_1+\cdots +x_k$ in its interior. 
Let $y_1, \ldots, y_m$ be generators of $\sigma(\mathfrak P)$. Then,
there exists a unique equation  such that 
$$
x_1+\cdots+x_k=b_1y_1+\ldots+b_my_m \quad\mbox{ with }\quad b_i\in \mathbb Z_{>0}.
$$
The equation $x_1+\cdots+x_k-b_1y_1-\ldots-b_my_m=0$ is called the \emph{primitive relation} of $\mathfrak P$. 
The \emph{degree} of $\mathfrak P$ is defined as 
$$
deg(\mathfrak P)=k-\sum_{i=1}^{m}b_i.
$$
\begin{theorem}[{\cite[Proposition 3.2 and Corollary 3.3]{CFH}}]\label{thm:Fu} 
Let $X$ be a smooth projective toric $T$-variety.
\begin{enumerate}
      \item There is a bijection between minimal rational components of degree $k$ on $X$ and primitive collections $\mathfrak P=\{x_1, \ldots, x_k\}$ of $\Sigma(1)$ such that $x_1+\cdots+x_k=0$.
      \item There exists a minimal rational component in $\mathrm{RatCurves}(X)$.
\end{enumerate}
\end{theorem}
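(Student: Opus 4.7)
The plan is to combine Batyrev's framework of primitive collections and primitive relations with Lemma~\ref{lemma:curve}, which already supplies one direction of the desired bijection in~(1).

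For the construction side of (1), given a primitive collection $\mathfrak{P}=\{x_1,\ldots,x_k\}\subset\Sigma(1)$ satisfying $x_1+\cdots+x_k=0$, the relation $\sum_{\rho\in\mathfrak{P}}\eta_\rho=0$ has all coefficients equal to $1$, so Lemma~\ref{lemma:curve} yields a free irreducible rational curve $C_{\mathfrak{P}}$ with $D_\rho\cdot C_{\mathfrak{P}}=1$ for $\rho\in\mathfrak{P}$ and $0$ otherwise; in particular the anticanonical degree equals $-K_X\cdot C_{\mathfrak{P}}=k$, matching $\deg(\mathfrak{P})$. Let $\mathcal{K}$ be the component of $\mathrm{RatCurves}(X)$ containing $C_{\mathfrak{P}}$. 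Freeness makes $\mathcal{K}$ a covering family, and to verify minimality one must show that $\mathcal{K}_x$ is projective for general $x\in X$: any boundary degeneration of a curve in $\mathcal{K}_x$ would split its class into a sum of two nontrivial effective classes, hence decompose the relation into two shorter positive relations on subsets of $\Sigma(1)$, which the primitivity of $\mathfrak{P}$ (every proper subset of $\mathfrak{P}$ generates a cone of $\Sigma$) precludes.

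For the converse in (1), start from a minimal family $\mathcal{K}$ with a general member $C$. By Proposition~\ref{ses}, $[C]$ corresponds to a relation $\sum_\rho a_\rho\eta_\rho=0$ where $a_\rho=D_\rho\cdot C$; non-negativity $a_\rho\geq 0$ follows by deforming $C$ inside $\mathcal{K}$ so that it either avoids or meets $D_\rho$ transversally. Set $\mathfrak{P}=\{\rho:a_\rho>0\}$. The minimality of $\mathcal{K}$ then forces $\mathfrak{P}$ to be a primitive collection with $\sum_{\rho\in\mathfrak{P}}\eta_\rho=0$ and each $a_\rho=1$: otherwise a proper subset of $\mathfrak{P}$ supporting a positive relation, or a relation with a nontrivial $\sigma(\mathfrak{P})$ part, would produce via Lemma~\ref{lemma:curve} a covering family of strictly smaller anticanonical degree, contradicting minimality. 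A routine check that the two assignments $\mathfrak{P}\mapsto\mathcal{K}$ and $\mathcal{K}\mapsto\mathfrak{P}$ are mutually inverse completes the bijection.

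For (2), any smooth projective toric variety is rational, hence uniruled, so Theorem~\ref{thm:existence of minimal} produces a minimal covering family; applying (1) identifies it with a primitive collection of the required form. The main obstacle I foresee is the converse in (1): translating the abstract minimality of $\mathcal{K}$ into the precise combinatorial statement that $a_\rho\in\{0,1\}$ and that $\mathfrak{P}=\mathrm{supp}(a_\rho)$ is a primitive collection with $\sigma(\mathfrak{P})=\{0\}$. This step mixes deformation theory of rational curves through a very general point with a case analysis of all ways the relation could a priori decompose or involve a nontrivial cone $\sigma(\mathfrak{P})$, and every such possibility must be ruled out by producing a genuinely smaller covering family.
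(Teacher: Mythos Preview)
The paper does not give its own proof of this theorem: it is quoted verbatim from \cite[Proposition 3.2 and Corollary 3.3]{CFH} and used as a black box. So there is nothing in the paper to compare your argument against beyond the statement itself. That said, your outline contains a genuine gap that would make the converse direction of (1) fail as written.

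In this paper (following \cite{Kol} and \cite{CFH}), a \emph{minimal family} is by definition a covering family $\mathcal{K}$ such that $\mathcal{K}_x$ is \emph{projective} for general $x$; it is \emph{not} defined by any minimality of degree. Your converse argument, however, proceeds by saying that if the support of $(a_\rho)$ were not a primitive collection with $\sigma(\mathfrak{P})=\{0\}$ and all $a_\rho\in\{0,1\}$, then Lemma~\ref{lemma:curve} would produce a covering family of strictly smaller anticanonical degree, ``contradicting minimality.'' But exhibiting a covering family of smaller degree does not contradict the projectivity of $\mathcal{K}_x$ at all; it only contradicts $\mathcal{K}$ being a minimal-\emph{degree} family, which is a different (and strictly stronger, cf.\ Lemma~\ref{lem:md}) hypothesis than what you are given. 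The correct mechanism, and the one used in \cite{CFH}, is the opposite implication: if the relation were not of the asserted primitive form, one shows that the general member of $\mathcal{K}$ through $x$ actually degenerates into a reducible curve through $x$, so $\mathcal{K}_x$ is not projective. In other words, the contradiction must come from a failure of properness of $\mathcal{K}_x$, not from the existence of some unrelated smaller family.

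A smaller but related issue appears already in your forward direction: you assert that any boundary degeneration would ``decompose the relation into two shorter \emph{positive} relations on subsets of $\Sigma(1)$.'' Effective curve classes on a toric variety need not have non-negative intersection with every $D_\rho$ (the $D_\rho$ are not nef in general), so the components of a degeneration do not automatically give relations with non-negative coefficients. One needs an additional argument---for instance, working with curves through a very general point of the open torus orbit and using freeness of the component through that point, together with a careful analysis of the remaining components---before primitivity of $\mathfrak{P}$ can be invoked to exclude such splittings.
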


%%%%%%%%%%%%%%%%%%%%%%%%%%%%%%%%%%%%%%%%%%%%%%%%%%%%%%%%%%%%%%%%%%%%%%%%%%%%%%
\subsection{Proof of Theorem \ref{thm:toric}} 

\begin{proof}
Recall Eq.~(\ref{eq:kaehlerclass}), that is, $[\omega]$ is a K\"ahler class on $X$ and $[\omega]=\sum_\rho \kappa_\rho[D_\rho]$ for some $\kappa_\rho\geq 0$.
Let $\sum_{\rho\in \Sigma(1)} a_{\rho}\eta_{\rho}= 0$ with $(a_{\rho})_{\rho\in \Sigma(1)} \in\mathbb Z^{|\Sigma(1)|}_{\geq 0}$ non equal to $\bf 0$. 
We first prove that 
\begin{equation}\label{eq:mainthm}
    w_G(X, \omega) \leq \sum_{\rho\in \Sigma(1)}a_{\rho}\kappa_{\rho}.
\end{equation}
By Lemma \ref{lemma:curve} together with Equation (\ref{eq:Payne}), there is a free rational curve $C$ in $X$ such that 
$$
\int_C {\omega}=\sum_{\rho\in \Sigma(1)} a_{\rho}\kappa_{\rho}.
$$ 
By Theorem \ref{thm:free+universal}, there is a covering family containing $C$. 
Then, there is a minimal-degree curve $C_0$ of $X$ such that $\int_{C_0} {\omega}\leq \int_C {\omega}$.
Thus, the minimum 
$$
\min
\big\{\small{\textstyle\sum}_{\rho\in \Sigma(1)}\kappa_{\rho}a_{\rho}: {\textstyle\sum}_{\rho\in \Sigma(1)}a_{\rho}\eta_{\rho}=0, ~a_{\rho}\in \mathbb Z_{\geq 0} , ~ \forall~ \rho \in \Sigma(1)\mbox{ and } (a_\rho)_\rho\neq\bf 0 %~ \forall~ \rho \in \Sigma(1) 
\big\}
$$
is greater than equal to $\int_{C_0} {\omega}$.
By applying Theorem \ref{thm:main1}, we get the desired inequality, that is Equation~(\ref{eq:mainthm}) and in turn, the first assertion of Theorem~\ref{thm:toric}.

Besides, the class of any minimal rational curve corresponds to a primitive collection $\mathfrak P\subset \Sigma(1)$ with the relation $\sum_{\rho\in \mathfrak P} \eta_{\rho}=0$, as stated in Theorem \ref{thm:Fu}. The second assertion of Theorem~\ref{thm:toric} thus follows.
\end{proof}

%%%%%%%%%%%%%%%%%%%%%%%%%%5
\subsection{Comparison with previous results}
\label{sec:comparison}
In this section, we compare our results on toric manifolds with some results previously obtained by Lu in \cite{Lu06a}. 
We also address some questions raised in \cite{HLS}.
We conclude this subsection by reformulating Theorem ~\ref{thm:toric} in terms of widths of polytopes and give an affirmative answer to  a conjecture stated in~\cite{AHN}.

%%%%%%%%%%%%%%%%%%%%%%%%%%%%%%%%%%%%%%%%%%%%%%%%%%%%%%%%%%%%%%%%%%%%%%%%%%%%%%%%%%%%%%%

\subsubsection{Lu's results in~\cite{Lu06a}}

Keep the notation $(X, \omega)$ as in Theorem~\ref{thm:toric}. 
Set 
$$
\gamma(X,\omega):=\min
\big\{\small{\textstyle\sum}_{\rho\in \Sigma(1)}\kappa_{\rho}a_{\rho}: {\textstyle\sum}_{\rho\in \Sigma(1)}a_{\rho}\eta_{\rho}=0, ~a_{\rho}\in \mathbb Z_{\geq 0} , ~ \forall~ \rho \in \Sigma(1)\mbox{ and } (a_\rho)_\rho\neq\bf 0 %~ \forall~ \rho \in \Sigma(1) 
\big\}
$$
and 
    $$\Lambda(X,\omega):=\max\{{\textstyle\sum}_{\rho\in \Sigma(1)}\kappa_{\rho}a_{\rho}: (a_{\rho})_{\rho\in \Sigma(1)}\in S \},$$
where  $$S:=\big \{ (a_{\rho})_{\rho\in \Sigma(1)}\in \mathbb Z^{|\Sigma(1)|}_{\geq 0}:  {\textstyle\sum}_{\rho\in \Sigma(1)}a_{\rho}\eta_{\rho}=0 ~\text{and}~ 1\leq {\textstyle\sum}_{\rho\in\Sigma(1)}a_{\rho}\leq n+1\big \}.$$

In  \cite[Theorem 1.2]{Lu06a} and \cite[Theorem 6.2]{Lu06a}, Lu obtained the upper bound $\gamma(X,\omega)$ 
for Fano smooth projective toric varieties and their blow ups at torus fixed points resp.; his proof makes use of Batyrev’s computations of the quantum cohomology of these varieties. Theorem~\ref{thm:toric} thus extends these results of Lu's to any smooth projective toric variety.

Besides, for any smooth projective toric variety, Lu proved that the Gromov width of a compact K\"ahler toric manifold 
is bounded from above by  $\Lambda(X,\omega)$; see \cite[Theorem 1.1]{Lu06a}. 
By Lemma~\ref{lem:bounds} and Example \ref{example} below, Theorem~\ref{thm:toric} gives a sharper bound than loc. cit.\textbf{}

\begin{lemma}\label{lem:bounds} The inequality $\gamma(X,\omega)\leq \Lambda(X,\omega)$ holds. 
\end{lemma}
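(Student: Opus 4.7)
The strategy is to compare the extrema of the linear functional $(a_\rho) \mapsto \sum_\rho \kappa_\rho a_\rho$ on two nested subsets of integer relations, and then to exhibit a single explicit element of the smaller subset that witnesses the inequality. More precisely, let $T$ denote the set over which $\gamma(X,\omega)$ is minimized, i.e.\ the non-trivial non-negative integer relations $\sum_\rho a_\rho \eta_\rho = 0$; then $S \subset T$ by construction, since the defining constraints of $S$ are strictly stronger. Consequently, for any $s \in S$ one has
$$\gamma(X,\omega) \leq \sum_{\rho \in \Sigma(1)} \kappa_\rho\, s_\rho \leq \Lambda(X,\omega),$$
the first inequality because $s \in T$ and $\gamma$ is a minimum on $T$, the second because $s \in S$ and $\Lambda$ is a maximum on $S$. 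The lemma therefore reduces to producing a single element of $S$.

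To produce such an element I would invoke Theorem~\ref{thm:Fu}: its part (2) asserts the existence of a minimal rational component on $X$, and its part (1) identifies that component with a primitive collection $\mathfrak P = \{x_1, \ldots, x_k\} \subset \Sigma(1)$ satisfying $x_1 + \cdots + x_k = 0$. The associated $0/1$-valued vector $(a_\rho)$, with $a_\rho = 1$ if $\rho \in \mathfrak P$ and $a_\rho = 0$ otherwise, is then a non-trivial non-negative integer relation whose total weight equals $k$. It will belong to $S$ as soon as we verify $1\leq k \leq n+1$.

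The bound $k \leq n+1$ is the only combinatorial input required, and it is immediate from Definition~\ref{def:primcoll}: for each $i$, the set $\mathfrak P \setminus \{x_i\}$ generates a cone of dimension $k-1$ in $\Sigma$, and any cone of the fan has dimension at most $n = \dim X$, whence $k-1 \leq n$. The lower bound $k \geq 2$ is automatic because no primitive vector $\eta_\rho$ vanishes. I do not anticipate any genuine obstacle here: once the set-theoretic inclusion $S \subset T$ has been observed, the inequality $\gamma(X,\omega) \leq \Lambda(X,\omega)$ becomes a tautology on nested sets, and the only point that must be invoked carefully is the non-emptiness of $S$, which rests on Theorem~\ref{thm:Fu}(2) together with the standard cardinality bound on primitive collections.
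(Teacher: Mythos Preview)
Your proof is correct and follows essentially the same approach as the paper's: both arguments invoke Theorem~\ref{thm:Fu} to produce a primitive collection $\mathfrak P=\{x_1,\ldots,x_k\}$ with $x_1+\cdots+x_k=0$, observe that the associated $0/1$-vector lies in $S$ because $1\le k\le n+1$, and conclude via the inclusion $S\subset T$. Your write-up is in fact more explicit than the paper's---you spell out the chain $\gamma(X,\omega)\le \sum_\rho \kappa_\rho s_\rho \le \Lambda(X,\omega)$ and justify $k\le n+1$ from Definition~\ref{def:primcoll}, whereas the paper simply asserts these points and writes ``The proof follows.''
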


\begin{proof} 
By the definition of primitive collections and Theorem \ref{thm:Fu}, it is clear that the primitive collection $\mathfrak P=\{x_1, \ldots, x_k\}$ corresponding to a minimal family satisfies the conditions in $S$. Namely, in this case, the equation ${\textstyle\sum}_{\rho\in \Sigma(1)}a_{\rho}\eta_{\rho}=0$ reads as  $\sum_{i=1}^k 1 x_i=0$ and we have $1\leq \sum_{i=1}^k 1\leq n+1$. 
The proof follows.
\end{proof}

\begin{example}\label{example}
Consider the Hirzebruch surface  $\mathscr H_2=\mathbb P(\mathcal O_{\mathbb P^1}\oplus \mathcal O_{\mathbb P^1}(2))$. The fan of $\mathscr H_2$ in $\mathbb R^2$ is given by the ray generators 
$$
u_1=(-1, 2),\quad u_2=(0, 1),\quad u_3=(1, 0) \quad\text{and} \quad u_4=(0, -1).
$$
The primitive relations of $\mathscr H_2$ are given by 
$$ u_2+u_4=0\quad \text{and}\quad u_1+u_3=2u_2$$
with degrees $2$ and $0$ respectively.

Note that a complete smooth toric variety $X$ is Fano if and only if $\deg(\mathfrak P)>0$ for all primitive collections $\mathfrak P$ of $X$; see \cite[Proposition 2.3.6]{Bat2}. By applying this characterization, we can show that $\mathscr H_2$ is not Fano.  

Besides, $\mathscr H_2$ is not a blowup of a toric surface at torus fixed points. 
Indeed, any smooth complete toric surface is obtained by a finite sequence of blowups at torus fixed points from either $\mathbb P^2$, $\mathbb P^1 \times \mathbb P^1$, or one of the Hirzebruch surfaces $\mathscr H_r$ with $r\geq 2$; see \cite[Theorem 10.4.3]{cox}.

Finally, note that the Picard group of $\mathscr H_2$ is generated by $D_{3}$ and $D_4$. Furthermore, a divisor $D=aD_3+bD_4$ is (very) ample if and only if $a, b >0$; see \cite[Page 273, Eq. 6.1.12]{cox}. 
Take the K\"ahler form $\omega$ associated to a very ample divisor $D=aD_3+bD_4$ with $a, b>0$.

Since the following relations  hold:
$u_2+u_4=0 \quad \text{and}\quad u_1+u_3+2u_4=0$, we have the inequalities
$$
\Lambda(\mathscr H_2,\omega)\geq a+2b\quad \text{and}\quad \gamma(\mathscr H_2,\omega)\leq b.
$$
And in turn, we have $\gamma(\mathscr H_2,\omega)<\Lambda(\mathscr H_2,\omega)$. 
\end{example}

\begin{remark}
The bound given in Theorem \ref{thm:toric} may not be sharp if $X$ is not Fano as shown in \cite[Example 5.6]{HLS}. Besides, even if $X$ is a Fano symplectic toric manifold, it is not known whether its Gromov-width is the symplectic area of a minimal curve; see \cite[Example 5.7 and Question 5.9]{HLS}. 
\end{remark}

%%%%%%%%%%%%%%%%%%%%%%%%%%%%%%%%%%%%%%%%%%%%%%%%%

\subsubsection{Some questions raised in~\cite{HLS}}

We notice that the first (resp. second) assertion of Theorem \ref{thm:toric} is stated as a question in~\cite[Question 5.10]{HLS}
(resp.~\cite[Question 5.11]{HLS}).

%%%%%%%%%%%%%%%%%%%%%%%%%%%%%%%%%%%%%%%%%%%%%%%%

\subsubsection{Conjecture 3.12 in~\cite{AHN}}
We conclude this section by noticing that Theorem~\ref{thm:toric} can be formulated by means of the lattice width of the momentum polytope $P(X,\omega)$ of $(X, \omega)$.

Recall that the lattice width of a convex polytope $P$ in $\mathbb R^n$ is defined as follows. 
First, the width of $P$ with respect to a non-zero linear functional ${\bf u}\in \mathrm{Hom}(\mathbb R^n, \mathbb R)$ is 
$$
\width_{\bf u}(P):=\max_{x, y\in P}|{\bf u}(x)-{\bf u}(y)|,
$$
and the \textit{lattice width} of $P$ is defined as 
$$
\width(P):=\min_{\mathbf{u}}\width_{\bf u}(P),
$$
where $\mathbf u$ runs over $\mathrm{Hom}(\mathbb Z^n, \mathbb Z)$.  
As proved in~\cite[Proposition 3.9]{AHN}, the following equality holds
\begin{equation}\label{eq:width}
    \gamma(X,\omega)= \width(P(X,\omega)).
\end{equation}

Thanks to this equality, Theorem \ref{thm:toric} thus reads

\begin{corollary}\label{cor:width}  
The Gromov width of a compact K\"ahler toric manifold is bounded from above by the lattice width of its momentum polytope.
\end{corollary}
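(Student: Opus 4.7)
The proof plan is a direct composition of two already-available ingredients, so the write-up should be a short one-line argument rather than a substantive new derivation. First I would invoke Theorem~\ref{thm:toric} to obtain
\[
w_G(X,\omega) \;\leq\; \gamma(X,\omega) := \min\big\{{\textstyle\sum}_{\rho\in \Sigma(1)}\kappa_{\rho}a_{\rho}: {\textstyle\sum}_{\rho\in \Sigma(1)}a_{\rho}\eta_{\rho}=0,\ a_{\rho}\in \mathbb Z_{\geq 0}\big\}.
\]
Then I would apply the equality (\ref{eq:width}) from \cite[Proposition 3.9]{AHN}, namely $\gamma(X,\omega) = \width(P(X,\omega))$. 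Chaining these two statements immediately gives $w_G(X,\omega) \leq \width(P(X,\omega))$, which is the asserted bound.

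Conceptually, the only content beyond Theorem~\ref{thm:toric} is the equality $\gamma(X,\omega) = \width(P(X,\omega))$, and the reason it holds is the following duality: the Kähler coefficients $\kappa_\rho$ are (up to sign) the support values defining $P(X,\omega) = \{m \in M_\mathbb{R} : \langle m, \eta_\rho\rangle \geq -\kappa_\rho\ \text{for all}\ \rho\}$, so an effective integer relation $\sum a_\rho \eta_\rho = 0$ pairs with $[\omega]$ to produce the curve area $\sum a_\rho \kappa_\rho$ (cf.~Proposition~\ref{ses}), while an integer functional $\mathbf{u} \in N$ computing the width of $P(X,\omega)$ can be matched against such a relation by linear-programming duality on the exact sequence of Proposition~\ref{ses}. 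Since (\ref{eq:width}) is cited, there is no genuine obstacle left: the corollary is a one-line assembly of Theorem~\ref{thm:toric} and \cite[Proposition~3.9]{AHN}, and that is precisely how I would present it.
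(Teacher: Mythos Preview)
Your proposal is correct and is exactly the paper's own argument: the corollary is stated immediately after equation~(\ref{eq:width}) with the sentence ``Thanks to this equality, Theorem~\ref{thm:toric} thus reads'', i.e., it is the one-line composition of Theorem~\ref{thm:toric} with the identity $\gamma(X,\omega)=\width(P(X,\omega))$ from \cite[Proposition~3.9]{AHN}. Your additional conceptual remark on why (\ref{eq:width}) holds is fine as commentary but is not needed for the proof, since the paper simply cites that equality.
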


The above result is stated as a conjecture in~\cite[Conjecture 3.12]{AHN}.

%%%%%%%%%%%%%%%%%%%%%%%%

\section{Seshadri constants}\label{sec:Seshadri}

In this section, we give upper bounds for the Seshadri constants of uniruled smooth projective complex varieties. Under certain conditions for toric manifolds, we show that Gromov widths equal Seshadri constants.

First recall Demailly's definition of the {\it Seshadri constant} $\varepsilon(X, \mathcal L, x)$ of a line bundle $\mathcal L$ on a smooth projective complex
variety $X$ at a point $x\in X$:
$$\varepsilon(X, \mathcal L, x):=\inf_{ C}\frac{\mathcal L\cdot C}{mult_xC},
$$
the infimum being taken over all reduced irreducible curves $C\subset X$ passing through $x$ 
and $mult_xC$ being the multiplicity of $C$ at $x$.

As the following proposition shows, Gromov widths and Seshadri constants of smooth projective varieties are closely related.
\begin{proposition}[{\cite[Proposition 6.2.1]{BC01}}]\label{prop:GSrelation} 
Let $X$ be a smooth projective complex algebraic variety equipped  with a very ample line bundle $\mathcal L$. For any point $x\in X$, the inequality
 $$
 \varepsilon(X, \mathcal L, x)\leq w_G(X, \omega_{\mathcal L})
 $$
holds, with $\omega_\mathcal L$ being the Fubini-Study form associated to $\mathcal L$.
\end{proposition}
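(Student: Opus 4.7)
The plan is to prove the inequality via the symplectic blowup interpretation of ball embeddings that is standard since McDuff-Polterovich. Fix any $a$ with $0 < a < \varepsilon(X, \mathcal L, x)$; I aim to show that a symplectic ball of capacity $a$ can be embedded into $(X, \omega_\mathcal L)$ centered at $x$, which will give $w_G(X, \omega_\mathcal L) \geq a$, and hence, by letting $a \to \varepsilon(X, \mathcal L, x)$, the desired inequality.

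Let $\pi: \tilde X \to X$ denote the algebraic blowup of $X$ at $x$, with exceptional divisor $E \cong \mathbb P^{n-1}$, where $n = \dim X$. The key intermediate claim is that the $\mathbb R$-divisor class $\pi^*\mathcal L - a E$ is ample on $\tilde X$. To prove this I would appeal to Seshadri's ampleness criterion (in the form available for $\mathbb R$-divisors), by checking positivity of the intersection with every irreducible curve in $\tilde X$. Three cases arise: (i) a line $\ell$ in $E$ contributes $(\pi^*\mathcal L - a E) \cdot \ell = a > 0$ with the standard convention $E \cdot \ell = -1$; (ii) the proper transform of an irreducible curve $C \subset X$ passing through $x$ with multiplicity $m = \mathrm{mult}_x C$ contributes $\mathcal L \cdot C - a m > 0$ by the choice $a < \varepsilon$; (iii) a curve in $\tilde X$ disjoint from $E$ reduces to the ampleness of $\mathcal L$ on $X$. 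Once ampleness is established, the class $[\pi^*\omega_\mathcal L] - a[E]$ lies in the K\"ahler cone of $\tilde X$ and is therefore represented by a K\"ahler, in particular symplectic, form $\tilde\omega$ on $\tilde X$.

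The final step invokes the McDuff-Polterovich theorem on symplectic blowups: the existence of a symplectic form on $\tilde X$ in the cohomology class $[\pi^*\omega_\mathcal L] - a[E]$ is equivalent to the existence of a symplectic embedding of the ball of capacity $a$, namely $B^{2n}(\sqrt{a/\pi})$, into $(X, \omega_\mathcal L)$. Combining the previous step with this equivalence yields such an embedding, and hence $w_G(X, \omega_\mathcal L) \geq a$, completing the argument.

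The main obstacle will be the invocation of the McDuff-Polterovich correspondence in the K\"ahler setting: while the statement is by now standard, one must carefully track the normalization relating the coefficient $a$ in front of $[E]$ to the capacity of the embedded ball (in particular the factor of $\pi$ appearing in the paper's definition of $w_G$). Once the bookkeeping is correct and the K\"ahler class $[\pi^*\omega_\mathcal L] - a[E]$ has been identified with the blowup form of weight $a$, the implication from Seshadri's criterion to the ball embedding is purely formal.
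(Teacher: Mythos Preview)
The paper does not supply its own proof of this proposition; it is simply quoted from \cite{BC01}. Your outline reproduces the Biran--Cieliebak argument: the Seshadri constant at $x$ is equivalently $\sup\{a>0:\pi^*\mathcal L-aE\text{ is ample on the blowup}\}$, and a K\"ahler form in the class $\pi^*[\omega_{\mathcal L}]-a[E]$ yields, via the McDuff--Polterovich blowup/ball correspondence, a symplectic embedding of a ball of capacity $a$.

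One minor technical point: your case-by-case curve check, as written, only establishes that $\pi^*\mathcal L-aE$ is nef (strictly positive on every curve does not by itself give ampleness). You can repair this either by invoking directly the standard reformulation of the Seshadri constant as the supremum of $a$ for which $\pi^*\mathcal L-aE$ is ample (Lazarsfeld, \emph{Positivity} I, Prop.~5.1.5), or by observing that your strict inequalities in cases (i)--(iii) persist for all $a'$ in a neighbourhood of $a$ and then using openness of the ample cone. With that adjustment the argument is correct and matches the cited source.
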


We derive the following statement from this proposition along with Theorem~\ref{thm:main1}.

\begin{corollary}\label{cor:Seshadri-general}
Let $X$ be a uniruled smooth projective complex algebraic variety equipped with a very ample line bundle $\mathcal L$. 
For any point $x\in X$, the following inequality holds
$$
\varepsilon(X, \mathcal L, x)\leq   \mathrm{min}\big\{\mathcal L\cdot C : \mbox{$C$ minimal curve of $X$}\big\}.
$$
\end{corollary}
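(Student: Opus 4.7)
The plan is to chain together the two upper bounds available in the paper: the Seshadri--Gromov inequality of Proposition~\ref{prop:GSrelation} and the Gromov width bound of Theorem~\ref{thm:main1}. Concretely, since $X$ is uniruled, Theorem~\ref{thm:existence of minimal} guarantees the existence of minimal covering families, so the minimum on the right-hand side is taken over a non-empty set of curves and is meaningful.

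First I would fix a minimal curve $C$ of $X$ and let $\omega_{\mathcal L}$ denote the Fubini--Study K\"ahler form associated to the very ample line bundle $\mathcal L$. Since $[\omega_{\mathcal L}] = c_1(\mathcal L)$ in $H^2(X,\mathbb{R})$, we have the standard identity
$$
\int_C \omega_{\mathcal L} = c_1(\mathcal L) \cdot [C] = \mathcal L \cdot C.
$$
Applying Theorem~\ref{thm:main1} to the K\"ahler manifold $(X,\omega_{\mathcal L})$ and the minimal curve $C$ yields $w_G(X,\omega_{\mathcal L}) \leq \mathcal L \cdot C$.

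Next, for any point $x\in X$, Proposition~\ref{prop:GSrelation} provides $\varepsilon(X,\mathcal L,x) \leq w_G(X,\omega_{\mathcal L})$. Chaining these two inequalities gives $\varepsilon(X,\mathcal L,x) \leq \mathcal L \cdot C$ for every minimal curve $C$. Taking the infimum (in fact minimum, since the intersection numbers $\mathcal L\cdot C$ take only finitely many values as $C$ ranges over minimal curves of bounded $\mathcal L$-degree) over all minimal curves $C$ delivers the desired bound.

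There is no real obstacle here: both ingredients have already been established earlier in the paper, and the only point that deserves a line of justification is the translation of the symplectic area $\int_C \omega_{\mathcal L}$ into the algebraic intersection number $\mathcal L\cdot C$, which is immediate from $[\omega_{\mathcal L}] = c_1(\mathcal L)$.
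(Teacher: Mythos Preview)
Your proposal is correct and follows exactly the approach indicated in the paper, which simply states that the corollary is derived from Proposition~\ref{prop:GSrelation} together with Theorem~\ref{thm:main1} without giving any further details. Your write-up in fact supplies more justification than the paper does, including the observation that the minimum is non-empty by Theorem~\ref{thm:existence of minimal} and the identification $\int_C\omega_{\mathcal L}=\mathcal L\cdot C$.
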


The Seshadri constants of line bundles on toric varieties at torus fixed points are computed in ~\cite{Rocco}.
Theorem \ref{thm:toric} yields an estimate for Seshadri constants on toric varieties at any point:

\begin{corollary}\label{cor:Seshadri-toric}
Let $X$ be a smooth projective complex toric variety equipped with a very ample line bundle $\mathcal L$.
For any point $x\in X$, the following inequality holds
$$
\varepsilon(X, \mathcal L, x)\leq \gamma(X,\omega_\mathcal L).
$$
\end{corollary}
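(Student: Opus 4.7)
The plan is to chain together two results already available in the paper. First I would invoke Proposition~\ref{prop:GSrelation}, applied to the smooth projective toric variety $X$ with its very ample line bundle $\mathcal L$, to obtain
$$
\varepsilon(X,\mathcal L,x) \leq w_G(X,\omega_{\mathcal L})
$$
at the given point $x\in X$, where $\omega_{\mathcal L}$ is the Fubini--Study form pulled back via the projective embedding determined by $\mathcal L$. Since $\omega_{\mathcal L}$ is a K\"ahler form on the compact toric manifold $X$, the hypotheses of Theorem~\ref{thm:toric} are satisfied.

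Next I would apply Theorem~\ref{thm:toric} to the K\"ahler toric manifold $(X,\omega_{\mathcal L})$ to conclude
$$
w_G(X,\omega_{\mathcal L}) \leq \gamma(X,\omega_{\mathcal L}),
$$
using the very definition of $\gamma(X,\omega_{\mathcal L})$ recalled in Subsection~\ref{sec:comparison}. Concatenating the two inequalities then yields the desired bound $\varepsilon(X,\mathcal L,x)\leq \gamma(X,\omega_{\mathcal L})$, valid for every point $x\in X$.

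There is no real obstacle here: the statement is a direct consequence of Proposition~\ref{prop:GSrelation} and Theorem~\ref{thm:toric}, and the only thing worth checking is that the Fubini--Study K\"ahler class associated to $\mathcal L$ is precisely the class $\sum_\rho \kappa_\rho[D_\rho]$ entering the definition of $\gamma$, which is standard for toric varieties equipped with a very ample line bundle.
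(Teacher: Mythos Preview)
Your proposal is correct and matches the paper's own reasoning: the corollary is stated as an immediate consequence of Theorem~\ref{thm:toric} combined with Proposition~\ref{prop:GSrelation}, exactly as you chain them.
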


We now consider the case where the Gromov width $w_G(X,\omega_{\mathcal L})$ equals $\gamma(X, \omega_\mathcal L)$ for polarized toric manifolds $(X, \mathcal L)$.
For this, we invoke a result of Ito's (\cite{ito2}), 
giving an estimate of the Seshadri constants of toric varieties at very general points. We recall it below after setting some further notation.

Let $\pi_{\mathbb Z}:M \to \mathbb Z$ be a surjective group homomorphism and let $\pi:M_{\mathbb R}\to \mathbb R$ be the linear projection induced by $\pi_{\mathbb Z}$.
Take the momentum polytope $P$ of $(X, \mathcal L)$ and fix a  $t\in \pi(P)\cap \mathbb Q$ such that $P(t):=P\cap \pi^{-1}(t)$ 
is a $(n-1)$-dimensional polytope. 
Let $X_{P(t)}$ denote the toric variety equipped with the very ample line bundle $\mathcal L_{P(t)}$ 
associated with the polytope $P(t)$. 
Let $|\pi(P)|$ denotes the lattice length of $\pi(P)$ with respect to the lattice $\pi_{\mathbb Z}(M)$. 
Denote the neutral element of the torus $T$ by $1_P\in X$. 

\begin{theorem}[{\cite[Theorem 3.6]{ito2}}]\label{thm:Ito}
Let $(X,\mathcal L)$ be a polarized smooth toric variety and $P$ be its momentum polytope. 
Then
$$
\min\big \{|\pi(P)|,  \varepsilon (X_{P(t)}, \mathcal L_{P(t)}, 1_{P(t)})\big \}\leq  \varepsilon (X, \mathcal L, 1_{P})\leq |\pi(P)|.
$$
\end{theorem}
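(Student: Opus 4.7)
The plan is to prove the two inequalities separately, using the toric structure in both cases.

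For the upper bound $\varepsilon(X, \mathcal L, 1_P) \leq |\pi(P)|$, I would exhibit an explicit curve of the right degree and multiplicity. Dualizing the surjection $\pi_{\mathbb Z}: M \to \mathbb Z$ yields an injection $\mathbb Z \hookrightarrow N$ of cocharacter lattices, hence a one-parameter subgroup $\lambda: \mathbb C^* \to T$. Its closure $C$ in $X$ is a toric $\mathbb P^1$ passing smoothly through $1_P = 1_T$, so $\mathrm{mult}_{1_P} C = 1$. Since the restriction of the $T$-moment map for $(X, \mathcal L)$ to $C$ factors through $\pi_{\mathbb Z}$, the polarized curve $(C, \mathcal L|_C)$ is the toric $\mathbb P^1$ attached to the one-dimensional lattice polytope $\pi(P)$, so $\mathcal L \cdot C = |\pi(P)|$. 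The definition of the Seshadri constant then yields the desired bound.

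For the lower bound, let $C$ be any reduced irreducible curve through $1_P$, set $m := \mathrm{mult}_{1_P} C$ and $\mu := \varepsilon(X_{P(t)}, \mathcal L_{P(t)}, 1_{P(t)})$. The strategy is a dichotomy on how $C$ interacts with $\lambda$. If $C$ is not collapsed by the rational quotient $\varphi: X \dashrightarrow Y$ associated to $\lambda$, then a pencil of sections of $\mathcal L^{\otimes N}$ taken in the two extreme $\lambda$-weight subspaces (whose weight gap is exactly $N \cdot |\pi(P)|$), combined with the standard degree-multiplicity inequality applied to the restriction to $C$, forces $\mathcal L \cdot C \geq m \cdot |\pi(P)|$. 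Otherwise $C$ lies in the $\lambda$-fiber through $1_P$, which can be identified with $(X_{P(t_0)}, \mathcal L_{P(t_0)})$ for some $t_0 \in \pi(P) \cap \mathbb Q$ with $1_P \leftrightarrow 1_{P(t_0)}$; the definition of $\varepsilon$ on this fiber then gives $\mathcal L \cdot C \geq m \cdot \mu$.

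The main obstacle is the first case. The rational map $\varphi$ is typically not a morphism, so one must pass to a $T$-equivariant resolution $\widetilde X$ and check that the strict transform of $C$ retains the right multiplicity at a lift of $1_P$; this is where the choice of $1_P$ in the open $T$-orbit is crucial. A further delicate point is constructing the pencil so that its chosen members do not themselves vanish at $1_P$, which may require taking suitable linear combinations within each of the two extreme weight subspaces of $H^0(X, \mathcal L^{\otimes N})$. Finally, matching the $\lambda$-fiber through $1_P$ to the specific $X_{P(t)}$ in the statement amounts to a compatibility between $t$ and $\pi(1_P)$ up to a $T$-translation preserving $\mathcal L$, which presumably explains why both sides of the formulation of Ito's theorem allow the flexibility to pick any admissible $t$.
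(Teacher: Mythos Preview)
The paper does not prove this theorem at all: it is quoted verbatim from Ito's paper \cite{ito2} and used as a black box in the subsequent corollary. There is therefore no ``paper's own proof'' to compare your proposal against.

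As a sketch of Ito's argument your outline is broadly in the right spirit, but note that Ito's actual proof proceeds via a toric degeneration rather than by analyzing a rational quotient by the one-parameter subgroup $\lambda$. He degenerates $(X,\mathcal L)$ to a polarized variety whose general fibers over the base $\mathbb P^1$ are $(X_{P(t)},\mathcal L_{P(t)})$, and then uses lower semicontinuity of Seshadri constants in families together with a computation on the central fiber. Your dichotomy argument (curve collapsed vs.\ not collapsed by $\varphi$) is plausible heuristics, but the passage ``a pencil of sections \ldots combined with the standard degree--multiplicity inequality'' is not a proof as written: you would need to justify why the difference of extreme $\lambda$-weights bounds the vanishing order along $C$ at $1_P$ from below by $m\cdot|\pi(P)|$, and that step does not follow from the ingredients you list. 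If you want a self-contained argument, it is safer to follow Ito's degeneration approach.
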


\begin{corollary} 
Let $(X,\mathcal L)$ be a polarized smooth toric variety and $P$ be its momentum polytope. 
Then the following inequalities hold
$$
\min\big \{|\pi(P)|,  \varepsilon (X_{P(t)}, \mathcal L_{P(t)}, 1_{P(t)})\big \}\leq
\varepsilon (X, \mathcal L, 1_P)
\leq 
w_G(X, \omega_{\mathcal L})\leq\gamma(X, \omega_{\mathcal L})\leq|\pi(P)|.
$$
In particular, we have equalities
if and only if
$
|\pi(P)|\leq \varepsilon (X_{P(t)}, \mathcal L_{P(t)}, 1_{P(t)})   
$
for some $t\in \pi(P)\cap \mathbb Q$.
\end{corollary}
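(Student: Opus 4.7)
The plan is to combine the cited results of the paper into the stated chain of inequalities, and then to extract the ``iff'' from the observation that equality throughout the chain forces a sandwich between the two outer quantities.

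First, the leftmost inequality is exactly the lower bound in Ito's Theorem~\ref{thm:Ito}, applied at the point $1_P$. The second inequality $\varepsilon(X, \mathcal{L}, 1_P) \leq w_G(X, \omega_{\mathcal{L}})$ is Proposition~\ref{prop:GSrelation}. The third inequality $w_G(X, \omega_{\mathcal{L}}) \leq \gamma(X, \omega_{\mathcal{L}})$ is the first assertion of Theorem~\ref{thm:toric}, rephrased with the notation $\gamma$ introduced in Subsection~\ref{sec:comparison}. What remains to justify is the rightmost inequality $\gamma(X, \omega_{\mathcal{L}}) \leq |\pi(P)|$.

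For this last inequality, I would invoke Equation~(\ref{eq:width}), which identifies $\gamma(X, \omega_{\mathcal{L}})$ with the lattice width of the momentum polytope $P = P(X, \omega_{\mathcal{L}})$. Since $\pi_{\mathbb{Z}}\colon M \to \mathbb{Z}$ is a surjective group homomorphism, it defines a nonzero element of $\mathrm{Hom}(M, \mathbb{Z})$ and is therefore an admissible functional for the minimum defining the lattice width. Hence $\mathrm{width}(P) \leq \mathrm{width}_{\pi}(P)$, and the right-hand side equals $\max_{x \in P} \pi(x) - \min_{x \in P} \pi(x)$, which is precisely the lattice length $|\pi(P)|$ of the interval $\pi(P) \subseteq \mathbb{R}$ measured with respect to the lattice $\pi_{\mathbb{Z}}(M) = \mathbb{Z}$. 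This closes the chain.

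For the equivalence statement, I would argue by direct sandwiching. Assume first that $|\pi(P)| \leq \varepsilon(X_{P(t)}, \mathcal{L}_{P(t)}, 1_{P(t)})$ for some admissible $t$. Then the minimum on the far left equals $|\pi(P)|$, so Theorem~\ref{thm:Ito} yields $|\pi(P)| \leq \varepsilon(X, \mathcal{L}, 1_P)$; combined with the upper portion of the chain this forces all inequalities to be equalities. Conversely, if every inequality is an equality, then in particular $\varepsilon(X, \mathcal{L}, 1_P) = |\pi(P)|$, and the equality of the leftmost term becomes $\min\{|\pi(P)|, \varepsilon(X_{P(t)}, \mathcal{L}_{P(t)}, 1_{P(t)})\} = |\pi(P)|$, which by a trivial property of $\min$ is equivalent to $|\pi(P)| \leq \varepsilon(X_{P(t)}, \mathcal{L}_{P(t)}, 1_{P(t)})$. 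No step is genuinely hard; the main care needed is in the identification $|\pi(P)| = \mathrm{width}_{\pi}(P)$ and in keeping consistent the choice of $t$ when applying Ito's theorem in both directions.
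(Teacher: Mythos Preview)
Your proof is correct and follows essentially the same approach as the paper: the paper invokes Proposition~\ref{prop:GSrelation}, Theorem~\ref{thm:toric}, and Equation~(\ref{eq:width}) together with the definition of lattice width for the chain, and then declares the last assertion ``clear.'' You spell out the iff direction explicitly via the sandwich argument, which is exactly what the paper leaves implicit.
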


\begin{proof} By Proposition \ref{prop:GSrelation}, the inequality $\varepsilon (X, \mathcal L, 1_{P})\leq w_G(X, \omega_{\mathcal L})$ holds. 
Thanks to Theorem \ref{thm:toric}, we have $w_G(X, \omega_{\mathcal L})\leq\gamma(X, \omega_{\mathcal L})$. 
Equation (\ref{eq:width}) along with definition of the lattice width yield the inequality
$\gamma(X,\omega_\mathcal L)=\width(P) \leq |\pi(P)|$ 
and, in turn, the first assertion.
The last assertion is clear.
\end{proof}

\noindent {\bf Acknowledgments.} We thank the anonymous referee for her/his careful reading of the manuscript and for providing suggestions to improve it.

%%%%%%%%%%%%%%%%%%%%%%%%%%%%%%%%%%%%%%%%%%%%%%%%%%%%%%%%%%%%%

\end{document}